\newtheorem{definition}{Definition}[section]
\newtheorem{lemma}[definition]{Lemma}
\newtheorem{theorem}[definition]{Theorem}
\newtheorem{proposition}[definition]{Proposition}
\newtheorem{example}[definition]{Example}
\newtheorem{remark}[definition]{Remark}
\newproof{proof}{\textbf{Proof}}
\journal{.}
\begin{document}

\begin{frontmatter}



\title{\textbf{Semi-overlap Functions and Novel Fuzzy Reasoning Algorithms with Applications }}


\author{Xiaohong Zhang$^{a}$, Mei Wang$^{b,*}$, Benjam\'{i}n Bedregal$^c$, Mengyuan Li$^{a}$, Rong Liang$^{a}$ }
\cortext[cor1]{Corresponding author. \\
Email addresses: zhangxiaohong@sust.edu.cn(X.H. Zhang), wangmeimath@163.com(M. Wang),\\ bedregal@dimap.ufrn.br (B. Bedregal), 201706060120@sust.edu.cn(M.Y. Li),\\ 1137882903@qq.com(R. Liang). }

\address[A]{School of Mathematics and Data Science, Shaanxi University of Science and Technology, Xi'an 710021, Shaanxi, China}
\address[B]{School of Electrical and Control Engineering, Shaanxi University of Science and Technology, Xi'an 710021, Shaanxi, China}
\address[C]{Departamento de Inform$\acute{a}$tica e Matem$\acute{a}$tica Aplicada, Universidade Federal do Rio Grande do Norte, Campus Universit$\acute{a}$rio s/n, 59072-970 Natal, Brazil}


\begin{abstract}
It is worth noticing that a fuzzy conjunction and its corresponding fuzzy implication can form a residual pair if and only if it is left-continuous. In order to get a more general result related on residual implications that induced by aggregation functions, we relax the definition of general overlap functions, more precisely, removing its right-continuous, and then introduce a new kind of aggregation functions, which called semi-overlap functions. Subsequently, we study some of their related algebraic properties and investigate their corresponding residual implications. Moreover, serval scholars have provided kinds of methods for fuzzy modus ponens (FMP,for short) problems so far, such as Zadeh's compositional rule of inference (CRI, for short), Wang's triple I method (TIM, for short) and the quintuple implication principle (QIP, for short). Compared with CRI and TIM methods, QIP method has some advantages in solving FMP problems, in this paper, we further consider the QIP method for FMP problems and prove that it satisfies the reducibility of multiple-rules fuzzy reasoning. Finally, we propose a new classification algorithm that based on semi-overlap functions and QIP method, which called SO5I-FRC algorithm. Through the comparative tests, the average accuracy of SO5I-FRC algorithm is higher than FARC-HD algorithm. The experimental results indicate that semi-overlap functions and QIP method have certain advantages and a wide range of applications in classification problems.
\end{abstract}

\begin{keyword} overlap function; semi-overlap function; fuzzy reasoning; quintuple implication principle; classification algorithm


\end{keyword}
\end{frontmatter}

\section{Introduction}
\label{intro}

Overlap functions were introduced in \cite{Bustince1} by Bustince, which is a special class of aggregation functions. This concept arise from some problems in information aggregation process, such as image processing \cite{Jurio}, decision making \cite{Bustince2}, community detection problems \cite{Gomez1} and classification \cite{Elkano1,Elkano2}. As a generalization of overlap functions, Miguel et al. \cite{Miguel} introduced the concept of general overlap functions. Notice that the difference between binary general overlap functions and overlap functions is boundary condition \cite{Gomez2}. In the definition of overlap functions, the boundary condition is a necessary and sufficient condition, while in the binary general overlap functions, the boundary condition is a sufficient condition. The authors in \cite{Miguel} made an experiment and showed that the general overlap function behave better when calculating the matching degree in some classification problems. There is a close relation between
fuzzy conjunction, which is a generalization form of aggregate functions, and fuzzy implication. Fuzzy implication has been widely studied in theory and practice \cite{Baczynski}. The most studied generalization is obtained by t-norm, called residual implication. However, Bustince et.al \cite{Bustince1,Bustince2} and Fodor et.al \cite{Fodor} have pointed out that the associativity of the t-norm is unnecessary in many applications,
such as decision making and classification problems. Therefore, it is necessary to construct the residual implication corresponding to the aggregation  function that does not meet the law of association. On the other hand, a fuzzy conjunction and its corresponding fuzzy implication can be form a residual pair if and only if it is left-continuous. In order to get a more general result related on residual implications that induced by aggregation functions, we relax the definition of general overlap functions, more precisely, removing its right-continuous, and then introduce a new kind of aggregation functions, which called semi-overlap functions in the present paper.

The most fundamental inference models of fuzzy reasoning are fuzzy modus ponens(FMP) and fuzzy modus tollens(FMT), which can be respectively expressed as follows:

\begin{center}
FMP: for given $A\rightarrow B$(rule) and $A^*$(input), calculate $B^*$(output)
\end{center}

\begin{center}
FMT: for given $A\rightarrow B$(rule) and $B^*$(input), calculate $A^*$(output)\\
\end{center}
where $A,A^*\in \mathcal{F}(U)$(the set of all fuzzy subsets on $U$) and $B,B^*\in \mathcal{F}(V)$(the set of all fuzzy subsets on $V$). In order to solve FMP problems,  Zadeh \cite{Zadeh} proposed compositional rule of inference (CRI, for short). Although this method is simple in calculation, it lacks clear logic sense. Therefore, the basic problem of fuzzy reasoning has attracted great attention and a series of meaningful achievements have been made. Among them, G.J. Wang \cite{Wang} proposed the full implication triple I method of fuzzy reasoning (TIM, for short).  This method effectively improved the CRI method. However, in CRI and TIM methods, the closeness of $A$ and $A^*$ (or $B$ and $B^*$) are not explicitly used in the process of calculating the consequence, which sometimes makes the computed approximation useless or misleading. In order to characterized the approximation $A^*$ and $A$ (or $B$ and $B^*$) in the process of fuzzy reasoning, B.K. Zhou et al \cite{Zhou} proposed the quintuple implication principle(QIP, for short). This method efficiently improves CRI and TIM methods. Subsequently, M.X. Luo et al \cite{Luo} investigate QIP method based on interval-valued fuzzy inference for FMP and FMT problems. In the present paper, we will continue to discuss the QIP method based on semi-overlap functions and their residual implications.

Fuzzy Rule-Based Classification Systems(FRBCSs) \cite{Ishibuchi} are one of the most popular methods in pattern recognition and machine learning.
FRBCSs is also used in many practical applications, such as image processing \cite{Nakashima} and anomaly intrusion detection \cite{Tsang}. In order to improve the interpretability of classification rules and avoid unnatural boundaries in attribute division, researchers have made different studies on classification systems based on fuzzy association rules \cite{Chen,Hua,Hu,Hua1,Pach,Yi}. In particular, J. Alcal\'{a}-Fdez \cite{AlcalaFdez} propose a fuzzy association rule-based classification method for high-dimensional problems (FARC-HD) to obtain an accurate and compact fuzzy rule-based classifier with a low computational cost. And in \cite{Elkano}, based on FARC-HD algorithm, $n$-dimensional overlap functions are used to solve the multi-class classification problems, and good results are obtained. In this paper, based on semi-overlap functions and quintuple implication principle, we propose a new classification algorithm, which is called SO5I-FRC algorithm. And the experimental results show that the average accuracy of classification using SO5I-FRC algorithm is higher than that of FARC-HD algorithm.

The paper is organized as follows. In Section 2, we recall some preliminary concepts related to this paper. In Section 3, we introduce the concept of semi-overlap functions and investigate the residual implication induced by semi-overlap functions. In Section 4, we study the quintuple implication principle and present an algorithm that satisfies the reducibility of multiple-rules fuzzy reasoning. In Section 5, we apply semi-overlap functions and quintuple implication principle to propose a new algorithm, which called
SO5I-FRC algorithm. Through comparative experiments, we conclude that the average accuracy of SO5I-FRC algorithm is higher than FARC-HD algorithm.

\section{Preliminaries}

In this part, we will recall some basic definitions and results, which will be used in the following sections.

\subsection{Aggregation functions}

\begin{definition}\emph{(\cite{Bustince1})
A function $A:[0,1]^2\rightarrow [0,1]$ is said to be an aggregation function if it satisfies the following conditions:}

\emph{$(A1)$ $A$ is increasing in each variable;}

\emph{$(A2)$ $A(0,0)=0$ and $A(1,1)=1$.}
\end{definition}

\begin{definition}\emph{(\cite{Bustince1})
A bivariate function $T:[0,1]^2\rightarrow [0,1]$ is said to be a t-norm if it satisfies the following conditions:}

\emph{$(T1)$ $T(u,v)=T(v,u)$;}

\emph{$(T2)$ $T(u,T(v,w))=T(T(u,v),w)$;}

\emph{$(T3)$ $T$ is increasing;}

\emph{$(T4)$ $T(u,1)=u$, for any $u\in[0,1]$.}\\
\emph{Moreover, a t-norm $T$ is called}

\emph{$(T5)$ continuous if it is continuous in both arguments at the same time.}

\emph{$(T6)$ positive if $T(u,v)=0$, then either $u=0$ or $v=0$.}
\end{definition}

\begin{definition}\label{2.1.3}
\emph{(\cite{Klement}) A t-norm $T$ is left-continuous if, for any  $u\in [0,1]$ and $\{v_i| i\in I\}\subseteq [0,1]$, for a non-empty set of index $I$, it satisfies:}
\begin{center}
$T(u, \sup\{v_i|i\in I\})=\sup\{T(u, v_i)|i\in I\}$.
\end{center}
\end{definition}

\begin{definition}\emph{(\cite{Bustince1,Bedregal})
A mapping $O:[0,1]^2\rightarrow [0,1]$ is called an overlap function if it satisfies the following conditions: for any $u,v\in [0,1]$,}

\emph{$(O1)$ $O(u,v)=O(v,u)$;}

\emph{$(O2)$ $O(u,v)=0$ if and only if $uv=0$;}

\emph{$(O3)$ $O(u,v)=1$ if and only if $uv=1$;}

\emph{$(O4)$ $O$ is increasing;}

\emph{$(O5)$ $O$ is continuous.}

\end{definition}

\begin{definition}\emph{(\cite{Miguel})
A $n$-ary function $GO:[0,1]^n\rightarrow [0,1]$ is called a general overlap function if the following conditions hold: for all $u_1, u_2,\cdots, u_n\in [0,1]$,}

\emph{$(GO1)$ $GO(u,v)=GO(v,u)$;}

\emph{$(GO2)$ If $\Pi^n_{i=1}u_i=0$, then $GO(u_1,u_2,\cdots,u_n)=0$;}

\emph{$(GO3)$ If $\Pi^n_{i=1}u_i=1$, then $GO(u_1,u_2,\cdots,u_n)=1$;}

\emph{$(GO4)$ $GO$ is increasing;}

\emph{$(GO5)$ $GO$ is continuous.}

\end{definition}

\begin{definition}\emph{(\cite{Paiva})
A mapping $QO:[0,1]^2\rightarrow [0,1]$ is called a quasi-overlap function if it satisfies the following conditions: for any $u,v\in [0,1]$,}

\emph{$(QO1)$ $QO(u,v)=QO(v,u)$;}

\emph{$(QO2)$ $QO(u,v)=0$ if and only if $uv=0$;}

\emph{$(QO3)$ $QO(u,v)=1$ if and only if $uv=1$;}

\emph{$(QO4)$ $QO$ is increasing.}
\end{definition}

\subsection{Fuzzy implication and fuzzy conjunction}

\begin{definition}\emph{(\cite{Krol})
A function $C:[0,1]^2\rightarrow [0,1]$ is called a fuzzy conjunction if the following conditions hold:}

\emph{$(C1)$ $C$ is increasing;}

\emph{$(C2)$ $C(1,1)=1$;}

\emph{$(C3)$ $C(0,0)=C(0,1)=C(1,0)=0$.}
\end{definition}

It is easy to know that t-norms, overlap functions, quasi-overlap functions and binary general overlap functions are fuzzy conjunctions.

\begin{definition}\emph{(\cite{Krol,Zhou1})
A function $I:[0,1]^2\rightarrow [0,1]$ is a fuzzy implication if it satisfies the following conditions: for each $u,v,w\in [0,1]$,}

\emph{$(I1)$ If $u\leq v$, then $I(v,w)\leq I(u,w)$;}

\emph{$(I2)$ If $v\leq w$, then $I(u,v)\leq I(u,w)$;}

\emph{$(I3)$ $I(0,0)=1$;}

\emph{$(I4)$ $I(1,1)=1$;}

\emph{$(I5)$ $I(1,0)=0$.}
\end{definition}

Let $C:[0,1]^2\rightarrow [0,1]$ be a fuzzy conjunction. Then we can define the function $I_C:[0,1]^2\rightarrow [0,1]$ as follows, for all $u,v\in[0,1]$,

\begin{equation}
I_C(u,v)=\sup\{w\in [0,1]|C(u,w)\leq v\}
\label{eq-3.1}
\end{equation}

\begin{theorem} \label{3.2.1}
 \emph{(\cite{Krol}) Let $C$ be a fuzzy conjunction. Then the function $I_C:[0,1]^2\rightarrow [0,1]$ given by Eq.(\ref{eq-3.1}) is a fuzzy implication if and only if $C$ fulfils the condition:}
\begin{equation}
 C(1,v)> 0, ~~\forall v\in (0,1].
\label{eq-3.2}
\end{equation}
\end{theorem}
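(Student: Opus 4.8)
The plan is to show that conditions (I1)--(I4) of a fuzzy implication hold for $I_C$ automatically whenever $C$ is a fuzzy conjunction, so that the only condition carrying real content is (I5), and then to prove that (I5) is equivalent to (\ref{eq-3.2}). First I would record the preliminary observation that $C(u,0)=0$ for every $u\in[0,1]$: this follows from monotonicity (C1) together with $C(1,0)=0$ from (C3), since $C(u,0)\le C(1,0)=0$. Consequently $0$ always belongs to the set $\{w\in[0,1]\mid C(u,w)\le v\}$, so this set is nonempty, its supremum exists, and $I_C$ is well defined on all of $[0,1]^2$.

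Next I would verify the monotonicity conditions (I1) and (I2) by a set-inclusion argument, which uses only that $C$ is increasing and needs neither (\ref{eq-3.2}) nor any continuity. For (I1), if $u\le v$ then $C(u,t)\le C(v,t)$ for every $t$, so $\{t\mid C(v,t)\le w\}\subseteq\{t\mid C(u,t)\le w\}$ and taking suprema gives $I_C(v,w)\le I_C(u,w)$. For (I2), if $v\le w$ then $\{t\mid C(u,t)\le v\}\subseteq\{t\mid C(u,t)\le w\}$, whence $I_C(u,v)\le I_C(u,w)$. I would then dispatch (I3) and (I4): since $C(0,w)\le C(0,1)=0$, we have $C(0,w)=0$ for all $w$, so the defining set at $u=0$ is all of $[0,1]$ and $I_C(0,0)=1$; and since $C(1,w)\le 1$ always, the defining set at $(1,1)$ is again all of $[0,1]$, giving $I_C(1,1)=1$.

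The heart of the proof, and the only place where the hypothesis (\ref{eq-3.2}) is used, is the equivalence of (I5) with (\ref{eq-3.2}), so this is the step I expect to be the real obstacle. Writing $I_C(1,0)=\sup\{w\mid C(1,w)\le 0\}=\sup\{w\mid C(1,w)=0\}$, I would argue both directions. If (\ref{eq-3.2}) holds, then $C(1,w)=0$ forces $w=0$ (as $C(1,0)=0$ while $C(1,w)>0$ for $w>0$), so the set reduces to $\{0\}$ and $I_C(1,0)=0$, which is (I5). Conversely, if (\ref{eq-3.2}) fails there is some $v_0\in(0,1]$ with $C(1,v_0)=0$; monotonicity then forces $C(1,w)=0$ for all $w\le v_0$, so $\{w\mid C(1,w)=0\}\supseteq[0,v_0]$ and $I_C(1,0)\ge v_0>0$, violating (I5).

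Combining these steps, $I_C$ satisfies all of (I1)--(I5), hence is a fuzzy implication, exactly when (I5) holds, which by the previous paragraph happens precisely when (\ref{eq-3.2}) holds. The one subtlety I would take care over is that the supremum defining $I_C$ need not be attained, so in the converse direction I would phrase the argument via the inclusion $[0,v_0]\subseteq\{w\mid C(1,w)=0\}$ rather than by claiming a maximizing $w$, which keeps the reasoning valid without any appeal to continuity of $C$.
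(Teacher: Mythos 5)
Your proof is correct and complete: the reduction to showing that (I1)--(I4) hold unconditionally for any fuzzy conjunction, plus the equivalence of (I5) with condition (\ref{eq-3.2}) via the set $\{w \in [0,1] \mid C(1,w)=0\}$, is exactly the standard argument, and your care in the converse direction (arguing via the inclusion $[0,v_0]\subseteq\{w \mid C(1,w)=0\}$ rather than assuming the supremum is attained) is the right way to avoid any appeal to continuity. Note that the paper itself offers no proof of this statement --- it is quoted from Kr\'{o}l's article --- so there is no in-paper argument to compare against; your write-up would serve as a self-contained justification.
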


If for any $ u, v, w\in [0,1]$, we have
\begin{center}
~~~~~~~~~~~~~~~$C(u,w)\leq v\Leftrightarrow I_C(u,v)\geq w$. ~~~~~~~~~~~~$\mathbf{(RP)}$\\
\end{center}
Then we say that the fuzzy conjunction  $C$ and the function $I_C$ defined in $Eq.(\ref{eq-3.1})$ satisfy the residuation property $\mathbf{(RP)}$.

\begin{theorem} \label{3.2.2}
\emph{ (\cite{Krol}) Let $C:[0,1]^2\rightarrow [0,1]$ be a fuzzy conjunction satisfying the condition (\ref{eq-3.2}), and $I_C: [0,1]^2\rightarrow [0,1]$ be a fuzzy implication derived from $C$. Then the following statements are equivalent:}

\emph{$(i)$ $C$ is left-continuous with respect to the second variable;}

\emph{$(ii)$ $C$ and $I_C$ satisfy the residuation property $\mathbf{(RP)}$;}

\emph{$(iii)$ $I_C(u,v)=\max\{w\in [0,1]|C(u,w)\leq v\}$, for all $u,v\in [0,1]$.}
\end{theorem}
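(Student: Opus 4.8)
The plan is to establish the cycle of implications $(i)\Rightarrow(iii)\Rightarrow(ii)\Rightarrow(i)$. Throughout I would write $S(u,v)=\{w\in[0,1]\mid C(u,w)\le v\}$, so that $I_C(u,v)=\sup S(u,v)$ by Eq.(\ref{eq-3.1}). Two facts valid for every fuzzy conjunction should be recorded at the outset: monotonicity of $C$ in the second variable makes each $S(u,v)$ a down-set, and $C(u,0)\le C(1,0)=0$ shows $0\in S(u,v)$, so the supremum is always well-defined. The forward half of $\mathbf{(RP)}$, namely $C(u,w)\le v\Rightarrow I_C(u,v)\ge w$, is then immediate, since $w\in S(u,v)$ forces $w\le\sup S(u,v)$; consequently all the real content lies in the reverse half of $\mathbf{(RP)}$ and in whether the supremum is attained.

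For $(i)\Rightarrow(iii)$ I would use left-continuity to show the supremum is in fact a maximum. Writing $w^*=I_C(u,v)=\sup S(u,v)$ and applying left-continuity in the second variable over the set $S(u,v)$ gives $C(u,w^*)=\sup\{C(u,w):w\in S(u,v)\}\le v$, because every member of that set is $\le v$; hence $w^*\in S(u,v)$ and $I_C(u,v)=\max S(u,v)$, which is (iii). For $(iii)\Rightarrow(ii)$ only the reverse half of $\mathbf{(RP)}$ remains: if $I_C(u,v)\ge w$, then attainment of the maximum gives $C(u,I_C(u,v))\le v$, and monotonicity yields $C(u,w)\le C(u,I_C(u,v))\le v$, establishing the full equivalence $\mathbf{(RP)}$.

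The substantive step, and the one I expect to be the main obstacle, is $(ii)\Rightarrow(i)$, where the purely order-theoretic residuation property must be upgraded to the analytic statement of left-continuity. Fixing $u$ and a family $\{v_i\mid i\in I\}$ with supremum $V$, I would set $s=\sup_{i}C(u,v_i)$; monotonicity gives $s\le C(u,V)$ at once, so the task reduces to proving $C(u,V)\le s$. The key device is a double application of $\mathbf{(RP)}$: from $C(u,v_i)\le s$ the forward direction yields $I_C(u,s)\ge v_i$ for each $i$, whence $I_C(u,s)\ge V$, and the reverse direction applied with $w=V$ and $v=s$ converts this back into $C(u,V)\le s$. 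This forces $C(u,V)=s$, which is exactly left-continuity in the second variable, and closes the cycle. Finally I would note that condition Eq.(\ref{eq-3.2}) enters only as the standing hypothesis guaranteeing, via Theorem \ref{3.2.1}, that $I_C$ is a genuine fuzzy implication; it plays no further role within the equivalences themselves.
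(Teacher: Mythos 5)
Your proposal is correct. The paper does not prove this statement at all: Theorem \ref{3.2.2} is quoted verbatim from Kr\'ol's paper as a preliminary result, so there is no in-paper argument to compare against. Judged on its own, your cycle $(i)\Rightarrow(iii)\Rightarrow(ii)\Rightarrow(i)$ is sound. The observation that $S(u,v)$ is a nonempty down-set (using $C(u,0)\le C(1,0)=0$) correctly isolates the trivial forward half of $\mathbf{(RP)}$; the application of the sup-preservation form of left-continuity (which is exactly how the paper defines left-continuity in Definition \ref{2.1.3}) to the set $S(u,v)$ itself gives attainment of the supremum; and the double use of $\mathbf{(RP)}$ in the step $(ii)\Rightarrow(i)$ --- passing from $C(u,v_i)\le s$ to $I_C(u,s)\ge V$ and back to $C(u,V)\le s$ --- is the standard residuation argument and is carried out without gaps. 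Your closing remark that condition (\ref{eq-3.2}) serves only to make $I_C$ a fuzzy implication and is not used in the equivalences is also accurate.
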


\begin{definition}\emph{(\cite{Bedregal,Zhou1,Dimuro2})
A fuzzy implication $I:[0,1]^2\rightarrow [0,1]$ satisfies: }

\emph{$\mathbf{(NP)}$ The left neutrality property $\Leftrightarrow$ $\forall v\in[0,1]:$ $I(1,v)=v$.}

\emph{$\mathbf{(LOP)}$ The left ordering property $\Leftrightarrow$ $\forall u,v\in[0,1]:$ $u\leq v\Rightarrow I(u,v)=1$.}

\emph{$\mathbf{(ROP)}$ The right ordering property $\Leftrightarrow$ $\forall u,v\in[0,1]:$ $I(u,v)=1\Rightarrow u\leq v $.}

\emph{$\mathbf{(OP)}$ The ordering property $\Leftrightarrow$ $\forall u,v\in[0,1]$: $u\leq v\Leftrightarrow I(u,v)=1$.}

\emph{$\mathbf{(EP)}$ The exchange principle $\Leftrightarrow$ $\forall u,v,w \in[0,1]:$ $I(u,I(v,w))=I(v,I(u,w))$.}

\emph{$\mathbf{(IP)}$ The identity principle $\Leftrightarrow$ $\forall u\in[0,1]$: $I(u,u)=1$.}

\emph{$\mathbf{(CB)}$ The consequent boundary $\Leftrightarrow$ $\forall u,v\in[0,1]$: $v\leq I(u,v)$.}

\emph{$\mathbf{(SIB)}$ The sub-iterative Boolean law $\Leftrightarrow$ $\forall u,v\in[0,1]:$ $I(u,v)\leq I(u,I(u,v))$.}

\emph{$\mathbf{(IB)}$ The iterative Boolean law $\Leftrightarrow$ $\forall u,v\in[0,1]$: $I(u,v)=I(u,I(u,v))$.}

\end{definition}

\section{Semi-overlap functions}

\subsection{Semi-overlap functions}

In this part, we introduce the notion of semi-overlap functions and study some of their related algebraic properties.

\begin{definition}
\emph{A binary function $SO:[0,1]^2\rightarrow [0,1]$ is called a semi-overlap function if the following conditions hold: for any $u,v\in [0,1]$,}

\emph{$(S1)$ $SO(u,v)=SO(v,u)$;}

\emph{$(S2)$ If $uv=0$, then $SO(u,v)=0$;}

\emph{$(S3)$ If $uv=1$, then $SO(u,v)=1$;}

\emph{$(S4)$ $SO$ is increasing;}

\emph{$(S5)$ $SO$ is left-continuous.}
\end{definition}

Here, as shown in Figure \ref{fig-CF}, we discuss the relations between semi-overlap functions and other aggregation functions.

\begin{figure}[!ht]
  \centering
  \includegraphics[width=9.0cm,height=5.0cm]{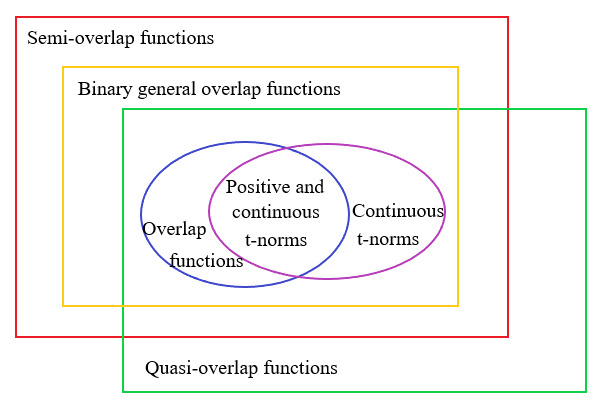}\\
  \caption{The included relation of different aggregation functions\label{fig-CF}}
\end{figure}

\begin{remark}
\emph{(1) Semi-overlap functions are bivariate general overlap functions which consider left continuity instead of continuity. So semi-overlap functions are generalization of bivariate general overlap functions.}

\emph{(2) The bivariate general overlap function is generalization of the overlap function. Hence, by (1), we can get that the semi-overlap function is also generalization of the overlap function.}

\emph{(3) There is no inclusive relations between semi-overlap functions and quasi-overlap functions. Since the latter does not require continuity, the  former is left-continuous. On the other hand, quasi-overlap functions has no zero-divisor, but semi-overlap functions may have zero-divisor.}
\end{remark}

Then we will give some examples of semi-overlap functions.

\begin{example}
\label{ex-2}
\emph{$(1)$. For a fix $a\in (0,1]$ and $u,v\in[0,1]$, the function}

\begin{center}
$SO(u,v)$=$\left\{
  \begin{array}{ll}
    0, & if~u+v\leq a; \\
    u\wedge v, & if~u+v> a.
  \end{array}
\right.$
\end{center}
\emph{is a semi-overlap function, but it is not a quasi-overlap function.}

\emph{$(2)$ The function, defined for any $u,v\in[0,1]$ by}

\begin{center}
$SO(u,v)$=$\left\{
  \begin{array}{ll}
    0, & if~u+v\leq 1; \\
    uv, & otherwise.
  \end{array}
\right.$\\
\end{center}
\emph{is a semi-overlap function. However, it is not  a binary general overlap function.}

\emph{$(3)$ The function $SO: [0,1]^2\rightarrow [0,1]$ given by:}

\begin{center}
$SO(u,v)=\min\{u,v\}\max\{u^2,v^2\}$\\
\end{center}
\emph{is a non-associative semi-overlap function having $1$ as neutral element.}

\emph{$(4)$ The function given by $SO_p(u,v)=u^pv^p$ with $p> 0$ and $p\neq 1$ is a semi-overlap function, but it is neither associative nor have $1$ as neutral element.}
\end{example}

\begin{definition}
\emph{A semi-overlap function $SO:[0,1]^2\rightarrow [0,1]$ is called deflationary semi-overlap function if}

\emph{$(S6)$  $\forall u\in [0,1]$: $SO(u,1)\leq u$,}\\
\emph{and $SO$ is called inflationary semi-overlap function if}

\emph{$(S7)$  $\forall u\in [0,1]$: $SO(u,1)\geq u$.}
\end{definition}

\begin{example}
\emph{Let us consider the function given in Example 3.3(4). Then $SO_p$ is deflationary when $p> 1$ and inflationary when $0< p< 1$.}
\end{example}

\begin{theorem}
\emph{The function $SO:[0,1]^2\rightarrow [0,1]$ is a semi-overlap function if and only if there are two function $f, g:[0,1]^2\rightarrow [0,1]$ such that}
\begin{center}
$SO(u,v)=\frac{f(u,v)}{f(u,v)+g(u,v)}$,\\
\end{center}
\emph{and satisfies the following conditions:}

\emph{$(1)$ $f(u,v)+g(u,v)\neq 0$ for all $u,v\in [0,1]$.}

\emph{$(2)$ $f$ and $g$ are commutative;}

\emph{$(3)$ If $uv=0$, then $f(u,v)=0$;}

\emph{$(4)$ If $uv=1$, then $g(u,v)=0$;}

\emph{$(5)$ $f$ is increasing and $g$ is decreasing;}

\emph{$(6)$ $f$ is left-continuous and $f+g$ is right-continuous;}
\end{theorem}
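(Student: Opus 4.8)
The plan is to prove both directions of the equivalence, treating the ``only if'' (necessity of the representation) first, since it is essentially immediate, and then the ``if'' (sufficiency), where the single genuinely delicate point is the left-continuity axiom $(S5)$.

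For necessity, suppose $SO$ is a semi-overlap function. I would simply exhibit the canonical pair $f:=SO$ and $g:=1-SO$, both mapping $[0,1]^2$ into $[0,1]$. Then $f+g\equiv 1$, so $f(u,v)+g(u,v)=1\neq 0$, giving $(1)$, and trivially $SO=\frac{f}{f+g}$. Conditions $(2)$ and $(5)$ hold because $SO$ is commutative and increasing by $(S1),(S4)$, whence $g=1-SO$ is commutative and decreasing. For $(3)$, if $uv=0$ then $SO(u,v)=0$ by $(S2)$, i.e. $f(u,v)=0$; for $(4)$, if $uv=1$ then $SO(u,v)=1$ by $(S3)$, i.e. $g(u,v)=0$. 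Finally $(6)$ holds since $f=SO$ is left-continuous by $(S5)$ and $f+g\equiv 1$ is trivially right-continuous. This disposes of one direction without any real work.

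For sufficiency, assume $f,g$ satisfy $(1)$--$(6)$ and set $SO=\frac{f}{f+g}$; by $(1)$ the denominator never vanishes, and since $f,g\geq 0$ we get $SO\in[0,1]$. I would then verify $(S1)$--$(S4)$ in turn: $(S1)$ is immediate from the commutativity of $f,g$ in $(2)$; $(S2)$ follows from $(3)$, since $uv=0$ forces $f(u,v)=0$ and hence $SO(u,v)=0$; and $(S3)$ follows from $(4)$, since $uv=1$ forces $g(u,v)=0$ and hence $SO(u,v)=\frac{f}{f}=1$ (the numerator being nonzero here by $(1)$). For monotonicity $(S4)$, by commutativity it suffices to fix $v$ and take $u_1\leq u_2$; writing $f_i=f(u_i,v)$ and $g_i=g(u_i,v)$, condition $(5)$ gives $f_1\leq f_2$ and $g_2\leq g_1$, and the desired inequality $\frac{f_1}{f_1+g_1}\leq\frac{f_2}{f_2+g_2}$ reduces after cross-multiplication (positive denominators) to $f_1 g_2\leq f_2 g_1$, which holds since $f_1 g_2\leq f_2 g_2\leq f_2 g_1$ using $f,g\geq 0$.

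The main obstacle is the left-continuity axiom $(S5)$. Here I would take an increasing net $v_i\uparrow v_0$ and analyze the one-sided limit of $SO(u,v_i)=\frac{f(u,v_i)}{f(u,v_i)+g(u,v_i)}$. Because $f$ is increasing and left-continuous, $f(u,v_i)\to f(u,v_0)$; because $g$ is decreasing, $g(u,v_i)$ converges to the left-limit $g(u,v_0^-)\geq g(u,v_0)$. I expect a case split: when $f(u,v_0)=0$ the monotonicity of $f$ forces $f(u,v_i)=0$ for all $i$, so $SO(u,v_i)=0=SO(u,v_0)$ and left-continuity is automatic; the work is concentrated in the case $f(u,v_0)>0$, where one must show that the limiting denominator $f(u,v_0)+g(u,v_0^-)$ agrees with $f(u,v_0)+g(u,v_0)$. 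This is exactly the place where the continuity hypotheses in $(6)$ --- the left-continuity of $f$ together with the right-continuity of $f+g$ --- must be combined with the monotonicity of $f$ and $g$ to pin down the left-limit of the denominator, and I anticipate this reconciliation of the one-sided continuities of $f$ and $f+g$ to be the crux of the argument; the remaining equality of the limits then follows from the monotonicity of the quotient already exploited for $(S4)$.
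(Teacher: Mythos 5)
Your necessity direction and your verification of $(S1)$--$(S4)$ in the sufficiency direction coincide with the paper's proof (same canonical pair $f=SO$, $g=1-SO$; same cross-multiplication argument for monotonicity). The genuine gap is $(S5)$. You correctly isolate the crux --- showing that, when $f(u,v_0)>0$, the left-limit $g(u,v_0^-)$ of the decreasing function $g$ equals $g(u,v_0)$ --- but you never prove it; you only ``anticipate'' that the hypotheses in $(6)$ will deliver it. They cannot: right-continuity of $f+g$ constrains limits from the right only and says nothing about $g(u,v_0^-)$, and left-continuity of $f$ is silent about $g$. Concretely, take $f(u,v)=uv$ and $g(u,v)=\tfrac{1}{2}(1-u)(1-v)+\tfrac{1}{2}h(u,v)$, where $h(u,v)=1$ if $\min\{u,v\}<\tfrac{1}{2}$ and $h(u,v)=0$ otherwise. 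Then $f,g$ satisfy all of $(1)$--$(6)$ (under the standard pointwise reading of one-sided continuity in each variable: $h$ is commutative, decreasing and right-continuous, and $f+g>0$ everywhere), yet $SO(1,v)=v/(v+\tfrac{1}{2})\to\tfrac{1}{2}$ as $v\uparrow\tfrac{1}{2}$ while $SO(1,\tfrac{1}{2})=1$. So the step you defer is not merely the hard part; it fails outright under the stated hypotheses, and no completion of your outline exists without strengthening $(6)$.

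For comparison, the paper disposes of $(S5)$ with the one-line claim that right-continuity of $f+g$ makes $1/(f+g)$ left-continuous; this is not a valid inference (composing with the continuous map $x\mapsto 1/x$ preserves the side of one-sided continuity rather than flipping it), so your instinct that this is the delicate point is sound --- the paper simply asserts its way past the same obstacle. The sufficiency direction does go through if $(6)$ instead requires $f+g$ (equivalently $g$, given that $f$ is left-continuous) to be left-continuous, since a quotient of left-continuous functions with nonvanishing denominator is left-continuous; the necessity direction is unaffected because there $f+g\equiv 1$. As written, however, both your argument and the paper's leave $(S5)$ unestablished.
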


\begin{proof}
$(\Rightarrow)$ Let $SO$ be a semi-overlap function. Now, suppose that $f(u,v)=SO(u,v)$ and $g(u,v)=1-SO(u,v)$, then $f(u,v)+g(u,v)=1\neq 0$. Therefore, the function

\begin{center}
$SO(u,v)=\frac{f(u,v)}{f(u,v)+g(u,v)}$\\
\end{center}
can be defined. And it is not hard to prove that the conditions $(1)-(6)$ hold.

$(\Leftarrow)$ Suppose that $f,g:[0,1]^2\rightarrow [0,1]$ satisfies the conditions $(1)-(6)$, we will show that
\begin{center}
$SO(u,v)=\frac{f(u,v)}{f(u,v)+g(u,v)}$\\
\end{center}
is a semi-overlap function. Obviously, it satisfies $(S1)$, $(S2)$ and $(S3)$. Next, we will  prove that it  satisfies the conditions $(S4)$ and $(S5)$.

$(S4)$ If $u_1\leq u_2$, since $f$ is increasing and $g$ is decreasing, then $f(u_1,v)\leq f(u_2,v)$ and $g(u_2,v)\leq g(u_1,v)$. Moreover, we can get
\begin{center}
$f(u_1,v)g(u_2,v)\leq f(u_2,v)g(u_1,v)$\\
\end{center}
and
\begin{center}
~$f(u_1,v)f(u_2,v)+f(u_1,v)g(u_2,v)\leq f(u_1,v)f(u_2,v)+f(u_2,v)g(u_1,v)$.\\
\end{center}
Hence
\begin{eqnarray*}
SO(u_1,v)&=&\frac{f(u_1,v)}{f(u_1,v)+g(u_1,v)}\\&\leq&\frac{f(u_2,v)}{f(u_2,v)+g(u_2,v)}
\\&=&SO(u_2,v).
\end{eqnarray*}

$(S5)$ Since $f+g$ is right-continuous, we have $\frac{1}{f+g}$ is left-continuous. Then according to the left continuity of $f$, we can get $\frac{f}{f+g}$ is left-continuous.
\end{proof}

\begin{theorem}
\emph{Let $SO_1, SO_2,\cdots, SO_m$ be semi-overlap functions and $r_1,r_2,\cdots,r_m$ be nonnegative weights with $\sum\limits_{j=1}^m r_j=1$. Then $SO(u,v)=\sum\limits_{j=1}^m r_j SO_j(u,v)$ is also a semi-overlap function.}
\end{theorem}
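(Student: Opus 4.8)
The plan is to verify each of the five defining conditions $(S1)$–$(S5)$ for the convex combination $SO(u,v)=\sum_{j=1}^m r_j\, SO_j(u,v)$ directly, leaning on the fact that each $SO_j$ already satisfies these conditions and that $r_1,\dots,r_m$ are nonnegative weights summing to $1$. The first observation I would record is that $SO$ indeed maps into $[0,1]$: since each $SO_j(u,v)\in[0,1]$ and $\sum_j r_j=1$ with $r_j\ge 0$, the combination is a weighted average and hence lies in $[0,1]$. This well-definedness check is quick but worth stating, since otherwise $SO$ would not even be a candidate semi-overlap function.

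For the commutativity $(S1)$, I would note that each term $r_j\,SO_j(u,v)=r_j\,SO_j(v,u)$ by $(S1)$ for $SO_j$, so summing preserves the symmetry. For the boundary conditions, if $uv=0$ then every $SO_j(u,v)=0$ by $(S2)$, whence $SO(u,v)=\sum_j r_j\cdot 0=0$, giving $(S2)$; and if $uv=1$ then every $SO_j(u,v)=1$ by $(S3)$, so $SO(u,v)=\sum_j r_j\cdot 1=\sum_j r_j=1$, giving $(S3)$. Here the normalization $\sum_j r_j=1$ is exactly what makes the upper boundary come out to $1$. Monotonicity $(S4)$ follows because a nonnegative linear combination of increasing functions is increasing: if $(u_1,v_1)\le(u_2,v_2)$ coordinatewise then each $SO_j(u_1,v_1)\le SO_j(u_2,v_2)$ by $(S4)$, and multiplying by $r_j\ge 0$ and summing preserves the inequality.

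The one condition that deserves the most care is left-continuity $(S5)$, and this is where I expect the only genuine subtlety. I would argue that a finite nonnegative linear combination of left-continuous functions is left-continuous. Concretely, using the formulation in Definition \ref{2.1.3}, for each fixed $u$ and index set $\{v_i\mid i\in I\}$ one has $SO_j\!\left(u,\sup_i v_i\right)=\sup_i SO_j(u,v_i)$ for each $j$; I would then sum these identities weighted by $r_j$. The step that must be handled cleanly is commuting the finite sum $\sum_{j=1}^m$ past the supremum over $i$, i.e. justifying $\sum_j r_j \sup_i SO_j(u,v_i)=\sup_i \sum_j r_j SO_j(u,v_i)$. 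Because each $SO_j(u,\cdot)$ is increasing in the second argument (by $(S4)$), the family is directed, so the suprema are attained along a common monotone net and the finite weighted sum interchanges with the supremum; alternatively one invokes the standard fact that a finite sum of lower-semicontinuous nondecreasing functions is again such. This interchange is the heart of the argument, and I would present it as the main (indeed only) nontrivial point, after which $(S5)$ follows and the proof concludes.
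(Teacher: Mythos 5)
Your proposal is correct and follows essentially the same route as the paper: conditions $(S1)$--$(S4)$ are dispatched termwise, and $(S5)$ reduces to pushing the finite weighted sum past the supremum. The paper performs that interchange without comment, whereas you rightly single it out as the only nontrivial step and justify it via the monotonicity/directedness of each $SO_j(u,\cdot)$, so your write-up is, if anything, more complete than the published proof.
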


\begin{proof}
Conditions $(S1)-(S4)$ are easy proved. Next, we prove that $SO$ satisfies $(S5)$. According to Definition \ref{2.1.3}, if $SO$ is left-continuous, then for any $u\in [0,1]$ and any $\{v_i|i\in I\}\subseteq [0,1]$, it follows that:
\begin{center}
$SO(u,\sup\{v_i|i\in I\})=\sup\{SO(u,v_i)|i\in I\}$.\\
\end{center}
Hence, we have
\begin{eqnarray*} SO(u,\sup\limits_{i\in I}\,\, v_i)&=& \sum\limits_{j=1}^m r_j SO_j(u,\sup\limits_{i\in I}\,\, v_i)\\&=& \sum\limits_{j=1}^m r_j(\sup\limits_{i\in I}\,\,SO_j(u,v_i))\\&=& \sup\limits_{i\in I}\sum\limits_{j=1}^m r_j SO_j(u,v_i)\\&=& \sup\limits_{i\in I}\,\, SO(u,v_i).
\end{eqnarray*}
\end{proof}

\subsection{Residual implications derived from semi-overlap functions}

Let $SO:[0,1]^2\rightarrow [0,1]$ be a semi-overlap functions. Then we defined $I_{SO}:[0,1]^2\rightarrow [0,1]$ as follows:

\begin{center}
$I_{SO}(u,v)=\sup\{w\in [0,1]|SO(u,w)\leq v\}$, ~~~~ for all $u,v\in [0,1]$.\\
\end{center}

\begin{lemma}
\label{3.2.3}
\emph{If $SO$ is an inflationary semi-overlap function, then it is a fuzzy conjunction that satisfies Eq.(\ref{eq-3.2}).}
\end{lemma}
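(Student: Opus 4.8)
The plan is to verify the two separate claims in turn: first that $SO$ meets the three fuzzy-conjunction axioms (C1)--(C3), and then that it satisfies the positivity condition Eq.(\ref{eq-3.2}). I expect the first part to be a direct reading-off from the semi-overlap axioms, with the inflationary hypothesis (S7) entering only in the second part.

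For the fuzzy-conjunction axioms I would argue as follows. Axiom (C1) (monotonicity) is literally (S4). Axiom (C2), namely $SO(1,1)=1$, is immediate from (S3) applied to the pair $(1,1)$, since $1\cdot 1=1$. For (C3) I would note that each of the pairs $(0,0)$, $(0,1)$, $(1,0)$ has product $0$, so (S2) forces $SO(0,0)=SO(0,1)=SO(1,0)=0$. Thus $SO$ is a fuzzy conjunction using only the ``overlap'' part of the definition, with no appeal to left-continuity or to the inflationary property.

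It remains to establish Eq.(\ref{eq-3.2}), i.e. $SO(1,v)>0$ for every $v\in(0,1]$, and this is exactly the step where the inflationary hypothesis is used. By commutativity (S1) we may rewrite $SO(1,v)=SO(v,1)$, and then (S7) gives $SO(v,1)\geq v$. Hence $SO(1,v)\geq v>0$ for any $v>0$, which is precisely Eq.(\ref{eq-3.2}).

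I do not anticipate any genuine obstacle here, since the whole argument is a term-by-term verification against the definitions. The only point worth flagging is conceptual rather than technical: the fuzzy-conjunction property of a semi-overlap function is automatic, whereas the inflationary condition (S7) is invoked purely to guarantee the strict positivity of $SO(1,\cdot)$ that Theorem \ref{3.2.1} requires in order for the induced $I_{SO}$ to be a bona fide fuzzy implication.
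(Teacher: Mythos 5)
Your proof is correct and follows the same route as the paper: the paper dismisses the fuzzy-conjunction axioms as obvious (you verify (C1)--(C3) explicitly from (S2)--(S4)) and then derives $SO(1,v)\geq v>0$ from (S7) exactly as you do. The only difference is that you spell out the commutativity step $SO(1,v)=SO(v,1)$, which the paper leaves implicit.
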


\begin{proof}
Obviously, every semi-overlap function is a fuzzy conjunction. Now, if $SO$ is inflationary, then by $(S7)$, for any $v\in(0,1]$, we can get $SO(1,v)\geq v> 0$.
\end{proof}

From Theorem \ref{3.2.1}, Theorem \ref{3.2.2} and Lemma \ref{3.2.3}, we can immediately draw the following conclusion.

\begin{proposition}
\emph{If $SO$ is an inflationary semi-overlap function, then the function $I_{SO}$ that induced by semi-overlap function is a residual implication. Moreover, $SO$ and $I_{SO}$ satisfies $\mathbf{(RP)}$.}
\end{proposition}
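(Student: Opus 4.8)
The plan is to obtain this proposition as an immediate corollary by chaining the three results that precede it, namely Theorem \ref{3.2.1}, Theorem \ref{3.2.2} and Lemma \ref{3.2.3}. Since the statement is explicitly framed as something we ``can immediately draw,'' the proof reduces to verifying that the hypotheses required by those two theorems are indeed supplied by the inflationary semi-overlap assumption, and then reading off the two conclusions.

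First I would invoke Lemma \ref{3.2.3}: an inflationary semi-overlap function $SO$ is a fuzzy conjunction satisfying condition (\ref{eq-3.2}), i.e. $SO(1,v)>0$ for every $v\in(0,1]$. Feeding this into Theorem \ref{3.2.1} gives that the induced map $I_{SO}$ of Eq.(\ref{eq-3.1}) is a fuzzy implication; since $I_{SO}$ is by construction the residuum of $SO$, this is exactly what we mean by a residual implication, settling the first assertion. For the second assertion I would apply the equivalence of Theorem \ref{3.2.2}, whose hypotheses (a fuzzy conjunction fulfilling (\ref{eq-3.2})) are already in hand. To reach the residuation property $\mathbf{(RP)}$ it suffices to establish statement $(i)$ of that theorem, that $SO$ be left-continuous with respect to its second variable; then the implication $(i)\Rightarrow(ii)$ yields that $SO$ and $I_{SO}$ satisfy $\mathbf{(RP)}$.

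The only point requiring care is the verification of $(i)$, and this is where I expect the sole (minor) obstacle to lie: one must reconcile the notion of left-continuity as packaged in $(S5)$ via Definition \ref{2.1.3}, i.e. the supremum-preservation identity
\begin{center}
$SO(u,\sup\{v_i\mid i\in I\})=\sup\{SO(u,v_i)\mid i\in I\}$,
\end{center}
with the phrase ``left-continuous with respect to the second variable'' used in Theorem \ref{3.2.2}. The observation is that this supremum-preservation property \emph{is} precisely left-continuity in the second argument, so $(S5)$ delivers $(i)$ directly (and, together with the commutativity $(S1)$, in the first argument as well). With $(i)$ confirmed, Theorem \ref{3.2.2} closes the argument, giving both that $I_{SO}$ is a residual implication and that the pair $(SO, I_{SO})$ enjoys $\mathbf{(RP)}$.
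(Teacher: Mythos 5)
Your argument is exactly the paper's: the paper states the proposition as an immediate consequence of Theorem \ref{3.2.1}, Theorem \ref{3.2.2} and Lemma \ref{3.2.3}, which is precisely the chain you assemble, and your extra care in identifying the supremum-preservation form of $(S5)$ with left-continuity in the second variable (via $(S1)$) only makes explicit a step the paper leaves tacit. The proposal is correct and takes essentially the same route.
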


In Table 1, we give some examples of semi-overlap functions and their residual implication.

\begin{flushleft}
\begin{tabular}{l|l}
  \hline
  $Semi-overlap~ functions$ & $Residual~ implications$ \\
  \hline
  (1)~$ SO(u,v)=\min\{u^p, v^p\}$ & $I_{SO}(u,v)=\left\{
  \begin{array}{ll}
  1, &  ~~~~~~~~~~~~~~~~~~~u^p\leq v; \\
  \sqrt[p]{v} , & ~~~~~~~~~~~~~~~~~~~u^p> v .
  \end{array}
  \right.$ \\
  \hline
  (2)~$SO(u,v)=\left\{
  \begin{array}{ll}
  0, &  u+v\leq a;a\in[0,1]\\
  min\{u,v\} , &u+v>a .
  \end{array}
  \right.$ & $I_{SO}(u,v)=\left\{
  \begin{array}{ll}
  1, &u\leq v; \\
   max\{(a-u), v\} , &u> v .
   \end{array}
   \right.$ \\
   \hline
  (3)~$SO(u,v)=\left\{
  \begin{array}{ll}
   0, &  ~~~~~~~~~~u+v\leq 1; \\
   uv , & ~~~~~~~~~~otherwise .
   \end{array}
   \right.$ & $I_{SO}(u,v)=\left\{
  \begin{array}{ll}
  1, & u\leq v; \\
   max\{(1-u), \frac{v}{u}\} , &otherwise  .
   \end{array}
   \right.$ \\
   \hline
  (4)~$SO(u,v)=\left\{
  \begin{array}{ll}
   0, &  ~~~~~~~~u+v= 0; \\
   \frac{2uv}{u+v} , & ~~~~~~~~otherwise .
   \end{array}
   \right.$ & $I_{SO}(u,v)=\left\{
  \begin{array}{ll}
  1, & ~~~~u=0; \\
   max\{0, \frac{uv}{2u-v}\} , & ~~~~otherwise .
   \end{array}
   \right.$  \\
   \hline
   (5)~$SO(u,v)=\max\{0,u+v-1\}$ & $ I_{SO}(u,v)=\left\{
  \begin{array}{ll}
  1, & ~~~~~~~~~u\leq v; \\
   v-u+1 , &~~~~~~~~~otherwise  .
   \end{array}
   \right.$ \\
  \hline
  (6)~$SO(u,v)=\max\{\frac{u+v-1}{u+v-uv},0\}$ & $ I_{SO}(u,v)=\left\{
  \begin{array}{ll}
  1, & ~~~~~~~~~~~~~u\leq v; \\
   \frac{uv+1-u}{uv+1-v} , &~~~~~~~~~~~~~otherwise.
   \end{array}
   \right.$ \\
   \hline
  (7)~$SO(u,v)=uv$ & $ I_{SO}(u,v)=\left\{
  \begin{array}{ll}
  1, & ~~~~~~~~~~~~~~~~~~~~~u\leq v; \\
   \frac{v}{u} , &~~~~~~~~~~~~~~~~~~~~~u> v  .
   \end{array}
   \right.$ \\
   \hline
  (8)~$SO(u,v)=\frac{u+v}{2}$ &  $I_{SO}(u,v)=\left\{
  \begin{array}{ll}
  1, & ~~~~~~~~~~~~~u\leq 2v-1; \\
   2v-u , & ~~~~~~~~~~~~~otherwise  .
   \end{array}
   \right.$ \\
   \hline
  (9)~$SO(u,v)=\min\{u,v\}$ & $ I_{SO}(u,v)=\left\{
  \begin{array}{ll}
  1, & ~~~~~~~~~~~~~~~~~~~~~u\leq v; \\
   v , &~~~~~~~~~~~~~~~~~~~~~otherwise  .
   \end{array}
   \right.$ \\
   \hline

\end{tabular}
\end{flushleft}
\begin{center}
Table 1. Semi-overlap functions and their residual implications.
\end{center}

\begin{proposition}\label{prop.2}
\emph{Let $SO:[0,1]^2\rightarrow [0,1]$ be a semi-overlap function and $I_{SO}$ be a residual implication. Then it holds that:}

\emph{(1) $SO(u,v)=\min\{w\in[0,1] | I_{SO}(u,w)\geq v\}$.}

\emph{(2) $I_{SO}$ satisfies $\mathbf{(NP)}$ if and only if $1$ is the neutral element of $SO$.}

\emph{(3) $I_{SO}$ satisfies $\mathbf{(EP)}$ if and only if $SO$ is associative.}

\emph{(4) $I_{SO}$ satisfies $\mathbf{(IP)}$ if and only if $SO$ satisfies $(S6)$.}

\emph{(5) $I_{SO}$ satisfies $\mathbf{(LOP)}$ if and only if $SO$ satisfies $(S6)$.}

\emph{(6) $I_{SO}$ satisfies $\mathbf{(ROP)}$ if and only if $SO$ satisfies $(S7)$.}

\emph{(7) $I_{SO}$ satisfies $\mathbf{(OP)}$ if and only if $1$ is the neutral element of $SO$.}

\emph{(8) $I_{SO}$ satisfies $\mathbf{(CB)}$ if and only if $SO(u,v)\leq \min\{u,v\}$.}

\emph{(9) If $I_{SO}$ satisfies $\mathbf{(CB)}$, then $I_{SO}$ satisfies $\mathbf{(SIB)}$.}

\emph{(10) If $SO(u,v)=\min\{u,v\}$, then $I_{SO}$ satisfies $\mathbf{(IB)}$.}

\emph{(11) If $SO$ has $1$ as neutral element, then $I_{SO}$ satisfies $\mathbf{(CB)}$.}
\end{proposition}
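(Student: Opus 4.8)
The common engine for every item is the residuation property $\mathbf{(RP)}$. Under the standing hypothesis that $I_{SO}$ really is a residual implication, $\mathbf{(RP)}$ is available by the left-continuity $(S5)$ together with Theorem~\ref{3.2.2}; concretely, for all $u,v,w\in[0,1]$,
\[
SO(u,w)\le v \iff w\le I_{SO}(u,v),
\]
and, using the commutativity $(S1)$, also $SO(u,w)\le v \iff u\le I_{SO}(w,v)$. The plan is to record these two adjunctions once and then obtain almost every item by rewriting the defining inequality of the relevant implicative property through the adjunction and reading off the resulting condition on $SO$. First I would prove (1), which is a pure Galois-connection identity: by $\mathbf{(RP)}$ one has $\{w\mid I_{SO}(u,w)\ge v\}=\{w\mid SO(u,v)\le w\}=[SO(u,v),1]$, whose minimum is exactly $SO(u,v)$.

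Next I would dispatch (2)--(8) by the same two-line scheme. For (4), $I_{SO}(u,u)=1\iff 1\le I_{SO}(u,u)\iff SO(u,1)\le u$, which is $(S6)$. For (8), $v\le I_{SO}(u,v)\iff SO(u,v)\le v$, and commutativity upgrades this to $SO(u,v)\le\min\{u,v\}$. For (6), $I_{SO}(u,v)=1\iff SO(u,1)\le v$, so $(S7)$ forces $u\le SO(u,1)\le v$ (giving $\mathbf{(ROP)}$), while the converse is a one-line contradiction obtained by taking $v=SO(u,1)$. Part (5) I would deduce from (4), since $\mathbf{(LOP)}$ implies $\mathbf{(IP)}$ (put $u=v$) and, conversely, $(S6)$ with $u\le v$ gives $SO(u,1)\le u\le v$, hence $I_{SO}(u,v)=1$. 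Then (7) is $\mathbf{(LOP)}\wedge\mathbf{(ROP)}\iff(S6)\wedge(S7)\iff SO(u,1)=u\iff 1$ is neutral (again using $(S1)$); and (2) follows by applying part (1) in one direction and the sup-definition in the other.

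Parts (9)--(11) are then immediate. Item (9) is $\mathbf{(CB)}$ evaluated at the point $I_{SO}(u,v)$, giving $I_{SO}(u,v)\le I_{SO}(u,I_{SO}(u,v))$. Item (11) follows from (8), because a neutral element $1$ forces $SO(u,v)\le SO(u,1)=u$ and $SO(u,v)\le SO(1,v)=v$ by monotonicity, whence $SO(u,v)\le\min\{u,v\}$ and $\mathbf{(CB)}$ holds. Item (10) is a direct case check on the G\"odel implication $I_{SO}(u,v)=1$ for $u\le v$ and $I_{SO}(u,v)=v$ otherwise.

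The genuinely substantial item is (3). The forward direction is the standard currying computation: assuming $SO$ associative, one unwinds the sup-definition of $I_{SO}$ through $\mathbf{(RP)}$ and associativity to obtain $I_{SO}(SO(x,y),z)=I_{SO}(x,I_{SO}(y,z))$, and then $(S1)$ converts this into $\mathbf{(EP)}$. The hard part will be the converse, $\mathbf{(EP)}\Rightarrow$ associativity, since here I cannot merely read off an inequality: I expect to recover $SO$ from $I_{SO}$ via part (1) and push $\mathbf{(EP)}$ through two nested adjunctions in order to identify $SO(SO(a,b),c)$ with $SO(a,SO(b,c))$. Controlling the interaction of the two suprema with the left-continuity $(S5)$ is where the real work lies, and this is the step I anticipate as the main obstacle.
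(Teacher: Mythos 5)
Your treatment of items (1), (2) and (4)--(11) is correct and is essentially the paper's own argument: everything is read off from the adjunction $SO(u,w)\le v\iff w\le I_{SO}(u,v)$ plus commutativity, and your derivations of (2) via part (1), of (6) by specialising $v=SO(u,1)$, and of (9) by substituting $I_{SO}(u,v)$ into $\mathbf{(CB)}$ are, if anything, slightly cleaner than the paper's. The one place where you stop short of a proof is the converse of (3), $\mathbf{(EP)}\Rightarrow$ associativity, which you flag as the main obstacle. The strategy you sketch there is exactly what the paper does, and the difficulty you anticipate (``controlling the interaction of the two suprema with the left-continuity'') does not arise: no interchange of suprema is ever needed, because each application of $\mathbf{(RP)}$ is a pointwise equivalence of membership conditions, so the sets being minimised are literally equal. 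Concretely, using part (1) and $\mathbf{(RP)}$ twice,
\begin{align*}
SO(u,SO(v,w))&=\min\{t: I_{SO}(u,t)\ge SO(v,w)\}
=\min\{t: I_{SO}(w,I_{SO}(u,t))\ge v\}\\
&=\min\{t: I_{SO}(u,I_{SO}(w,t))\ge v\}
=\min\{t: I_{SO}(w,t)\ge SO(u,v)\}=SO(SO(u,v),w),
\end{align*}
where the middle equality is $\mathbf{(EP)}$ and the outer ones are the two adjunction passes; left-continuity is consumed once and for all in establishing $\mathbf{(RP)}$ and the attainment of the extrema. (Symmetrically, the paper proves the direction you call ``standard'' by the dual chain on $\max\{t: SO(u,t)\le\cdot\}$, equivalent to your currying identity $I_{SO}(SO(x,y),z)=I_{SO}(x,I_{SO}(y,z))$.) So the gap is one of execution, not of idea: writing out the four-line chain above closes it.
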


\begin{proof}
(1) Straightforward from Theorem 13 in \cite{Krol}.

(2) $(\Rightarrow)$~Suppose that $I_{SO}$ satisfies $\mathbf{(NP)}$, that is, for any $v\in[0,1]$,
\begin{equation}
\label{3.2.4}
I_{SO}(1,v)=\max\{t\in [0,1]|SO(1,t)\leq v\}=v.\\
\end{equation}
It follows that $SO(1,v)\leq v$. Assume that there exists a $v_0\in [0,1]$ such that $SO(1,v_0)< v_0$. Then there also exists $w< v_0$ such that $SO(1,v_0)\leq w$. According to $\mathbf{(RP)}$, we have $w< v_0\leq I_{SO}(1,w)$, which is contradiction to the Eq. (\ref{3.2.4}). Hence $SO(1,v)=v$.

$(\Leftarrow)$~ Suppose that for any $v\in[0,1]$, $SO(1,v)=v$. Then we have $I_{SO}(1,v)=\max\{t\in [0,1]|SO(1,t)\leq v\}=\max\{t\in [0,1]|t\leq v\}=v$. Hence $I_{SO}$ satisfies $\mathbf{(NP)}$.

(3) Let $u,v,w\in [0,1]$.
 \begin{eqnarray*}(\Rightarrow)~~ SO(u,SO(v,w))&=& \min\{t\in[0,1] | I_{SO}(u,t)\geq SO(v,w)\}~~~~~~(item ~~(1))\\&= & \min\{t\in[0,1] | I_{SO}(w,I_{SO}(u,t))\geq v\}~~~~~~(by~~ (RP))\\&=& \min \{t\in[0,1] | I_{SO}(u,I_{SO}(w,t))\geq v\}~~~~~~(by~~ (EP))\\&=& \min\{t\in[0,1] | I_{SO}(w,t)\geq SO(u,v)\}~~~~~~(by ~~(RP))\\&=& SO(w,SO(u,v))~~~~~~~~~~~~~~~~~~~~~~~~~~~~~~~~~~~~(item~~ (1))\\&=& SO(SO(u,v),w)~~~~~~~~~~~~~~~~~~~~~~~~~~~~~~~~~~~~(by~~ commutative).
\end{eqnarray*}
\begin{eqnarray*}(\Leftarrow)~~I_{SO}(u,I_{SO}(v,w))&=&\max\{t\in[0,1] | SO(u,t)\leq I_{SO}(v,w)\}\\&=&\max\{t\in[0,1] | SO(v,SO(u,t))\leq w\}~~~~~~(by~~ (RP))\\&=&\max\{t\in[0,1] | SO(u,SO(v,t))\leq w\}~~~~~~(by~~ associative)\\&=&\max\{t\in[0,1] | SO(v,t)\leq I_{SO}(u,w)\}~~~~~~(by~~ (RP))\\&=&I_{SO}(v,I_{SO}(u,w)).
\end{eqnarray*}

(4) For each $ u\in[0,1]$, $I_{SO}(u,u)=\max\{t\in[0,1]|SO(u,t)\leq u\}=1$ if and only if $SO(u,1)\leq u$.

(5) $(\Rightarrow)$~ If $u\leq v$, then $I_{SO}(u,v)=1$. In particular, $I_{SO}(u,u)=1$. Hence, by (4), we have $SO$ satisfies $(S6)$.

$(\Leftarrow)$~  If $u\leq v$, then by $(S6)$, we get $SO(u,1)\leq u\leq v$. Thus, $I_{SO}(u,v)=\max\{w\in [0,1]|SO(u,w)\leq v\}=1$.

(6) $(\Rightarrow)$~ By item (1), we have $SO(u,1)=\min\{w\in[0,1]| I_{SO}(u,w)\geq 1\}$, then $I_{SO}(u,SO(u,1))=1$. By $\mathbf{(ROP)}$,  $I_{SO}(u,v)=1\Rightarrow u\leq v$, hence, we can get $SO(u,1)\geq u$.

$(\Leftarrow)$~ If $SO(u,1)\geq u$ for each $u\in [0,1]$, then
\begin{eqnarray*} I_{SO}(u,v)=1&\Rightarrow&\max\{w\in[0,1] | SO(u,w)\leq v\}=1\\&\Rightarrow& SO(u,1)\leq v\\&\Rightarrow& u\leq v.
\end{eqnarray*}

(7) Obviously, it can be obtained from (5) and (6).

(8)$(\Rightarrow)$ Suppose that $I_{SO}$ satisfies $\mathbf{(CB)}$. For any $u,v\in[0,1]$, we have
\begin{eqnarray*}
v \leq I_{SO}(u,v) &\Rightarrow& v\leq \max\{w\in [0,1]|SO(u,w)\leq v\}\\&\Rightarrow& v\in \max\{w\in [0,1]|SO(u,w)\leq v\}\\&\Rightarrow& SO(u,v)\leq v.
\end{eqnarray*}
That is
\begin{equation}
v \leq I_{SO}(u,v)\Rightarrow SO(u,v)\leq v.
\end{equation}
Analogously, we have
\begin{equation}
u\leq I_{SO}(v,u)\Rightarrow u\leq SO(v,u).
\end{equation}
Since $SO$ is commutative, by Eqs.(4) and (5), we can concludes that
\begin{center}
$v\leq I_{SO}(u,v)\Rightarrow SO(u,v)\leq \min\{u,v\}$.
\end{center}

$(\Leftarrow)$ For all $u,v\in[0,1]$, it holds that
\begin{eqnarray*}SO(u,v)\leq \min\{u,v\}&\Rightarrow& SO(u,v)\leq v\\&\Rightarrow& v\in \{w\in[0,1]|SO(u,w)\leq v\}\\&\Rightarrow& v\leq \max \{w\in[0,1]|SO(u,w)\leq v\}\\&\Rightarrow& v\leq I_{SO}(u,v)
\end{eqnarray*}

(9) Straightforward from (I2).

(10) For any $u,v\in[0,1]$, if $SO(u,v)=\min\{u,v\}$, then it holds that

\begin{center}
$ I_{SO}(u,v)=\max\{w\in [0,1]|\min\{u,w\}\leq v\}
=\left\{
  \begin{array}{ll}
  1, &u\leq v; \\
  v, &u> v .
   \end{array}
   \right.$\\
\end{center}
If $u\leq v$, then $I_{SO}(u,I_{SO}(u,v))=I_{SO}(u,1)=1=I_{SO}(u,v)$. On the other hand, if $u> v$, then $I_{SO}(u,I_{SO}(u,v))=I_{SO}(u,v)$.

(11) Assume that for any $v\in [0,1]$, $SO(1,v)=v$. Since $SO$ is increasing, $SO(u,v)\leq SO(1,v)$, that is $SO(u,v)\leq v$. According to $\mathbf{(RP)}$, we have $v\leq I_{SO}(u,v)$.
\end{proof}

\section{Quintuple implication principle}

\subsection{Quintuple implication principle of FMP}

In this subsection, we will use the quintuple implication principle (QIP, for short), which induced by semi-overlap functions and their residual implications, to solve FMP problems. The FMP problems as follows:

\begin{center}
\begin{tabular}{l}
  rule~~~~ $A\rightarrow B$ \\

  input~~~$A^*$ \\
   \hline
  output~~~~~~~~~$B^*$.\\
\end{tabular}
\end{center}

\begin{definition}
\emph{Let $U$ and $V$ be two non-empty sets, $A,A^*\in \mathcal{F}(U)$, $B\in \mathcal{F}(V)$. If $B^*$ in FMP is the minimum fuzzy set of $\mathcal{F}(V)$ such that, for any $u\in U, v\in V$,}

\begin{center}
$(A(u)\rightarrow B(v))\rightarrow((A^*(u)\rightarrow A(u))\rightarrow (A^*(u)\rightarrow B^*(v)))$\\
\end{center}
\emph{has the maximum value, i.e. $1$, then $B^*$ is called the FMP-solution of quintuple implication principle.}

\end{definition}

\begin{theorem}\label{4.1.1}
 \emph{Let the implication $\rightarrow$ in FMP be a residual implication derived from the semi-overlap function $\odot$ with 1 as neutral element. Then the FMP-solution $B^*$ of quintuple implication principle is calculated as follows:}
\begin{center}
$B^*(v)=\sup\limits_{u\in U}\{A^*(u)\odot[(A^*(u)\rightarrow A(u))\odot(A(u)\rightarrow B(v))]\}$,\\
\end{center}
\emph{for any $v\in V$.}
\end{theorem}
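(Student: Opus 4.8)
The plan is to rewrite the defining requirement of the QIP-solution --- that the nested implication attain its maximal value $1$ for every $u\in U$ and $v\in V$ --- as an explicit lower bound on $B^*(v)$, and then to identify the minimal $B^*$ as the corresponding pointwise supremum. First I would fix $v\in V$ and abbreviate $a=A(u)$, $b=B(v)$, $a^*=A^*(u)$ and $b^*=B^*(v)$, so that the condition under study reads
\[
(a\to b)\to\bigl((a^*\to a)\to(a^*\to b^*)\bigr)=1 .
\]
Because $\odot$ has $1$ as neutral element, item~(7) of Proposition~\ref{prop.2} tells us that the residual implication $\to$ satisfies the ordering property $\mathbf{(OP)}$, so an implication equals $1$ precisely when its antecedent is dominated by its consequent. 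Applying this to the outermost implication replaces the condition by
\[
(a\to b)\le (a^*\to a)\to(a^*\to b^*) .
\]

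Next I would strip away the two remaining implications with the residuation property $\mathbf{(RP)}$, available for $\odot$ and $\to$ by the proposition preceding Table~1. Writing $\mathbf{(RP)}$ in the equivalent form $w\le(x\to y)\iff x\odot w\le y$ and residuating first with respect to $x=(a^*\to a)$ and then with respect to $x=a^*$ transforms the last inequality, in two steps, into
\[
a^*\odot\bigl[(a^*\to a)\odot(a\to b)\bigr]\le b^* .
\]
Since every step is an equivalence, the defining condition of the QIP-solution, required to hold for all $u\in U$, is equivalent to demanding that $B^*(v)$ exceed the left-hand expression for each $u$.

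The decisive point is that this left-hand expression depends only on $A$, $A^*$, $B$ and the fixed $v$, and not on $B^*$. Hence, for each $v$, the admissible values of $B^*(v)$ are exactly the upper bounds of the set $\{A^*(u)\odot[(A^*(u)\to A(u))\odot(A(u)\to B(v))]\mid u\in U\}$, whose least element is its supremum over $u$. Defining $B^*(v)$ to be this supremum thus produces a bona fide fuzzy set on $V$ that meets the condition for every $u$ and $v$ and is minimal by construction, which is exactly the asserted formula.

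I expect the only genuine care to be needed in the middle step: one must apply $\mathbf{(RP)}$ in the correct direction and keep straight which argument of $\odot$ is being residuated, since $\mathbf{(RP)}$ as stated residuates with respect to the first variable. Carrying out the two residuations in the order indicated reproduces the stated nesting $A^*(u)\odot[(A^*(u)\to A(u))\odot(A(u)\to B(v))]$ without even having to invoke commutativity. Left-continuity, encoded in $(S5)$, is what legitimises the use of $\mathbf{(RP)}$ throughout, so no separate convergence argument is required.
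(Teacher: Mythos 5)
Your proposal is correct and follows essentially the same route as the paper's proof: unfolding the quintuple condition via the ordering property $\mathbf{(OP)}$ (obtained from Proposition~\ref{prop.2}(7) using the neutral element), applying $\mathbf{(RP)}$ twice to reach $A^*(u)\odot[(A^*(u)\rightarrow A(u))\odot(A(u)\rightarrow B(v))]\leq B^*(v)$, and identifying the minimal admissible $B^*(v)$ as the supremum of the left-hand sides. The only difference is presentational --- you phrase it as a single chain of equivalences where the paper argues the two directions (validity and minimality) separately.
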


\begin{proof}
(1) First, we shall prove that, for each $u\in U$, $v\in V$,

\begin{center}
$(A(u)\rightarrow B(v))\rightarrow[(A^*(u)\rightarrow A(u))\rightarrow (A^*(u)\rightarrow B^*(v))]=1$.\\
\end{center}
In particular, by $B^*(v)=\sup\limits_{u\in U}\{A^*(u)\odot[(A^*(u)\rightarrow A(u))\odot(A(u)\rightarrow B(v))]\}$, we have
\begin{center}
$A^*(u)\odot[(A^*(u)\rightarrow A(u))\odot(A(u)\rightarrow B(v))]\leq B^*(v)$.\\
\end{center}
Further, by $\mathbf{(RP)}$, it follows that
\begin{center}
$(A^*(u)\rightarrow A(u))\odot(A(u)\rightarrow B(v))\leq A^*(u)\rightarrow B^*(v)$.\\
\end{center}
Using $\mathbf{(RP)}$ again, we get
\begin{center}
$(A(u)\rightarrow B(v))\leq(A^*(u)\rightarrow A(u))\rightarrow (A^*(u)\rightarrow B^*(v))$.\\
\end{center}
Since the semi-overlap function has 1 as the neutral element, according to Proposition \ref{prop.2}(7), we have
\begin{center}
$(A(u)\rightarrow B(v))\rightarrow[(A^*(u)\rightarrow A(u))\rightarrow (A^*(u)\rightarrow B^*(v))]=1$.
\end{center}

(2) Then, we prove that $B^*(v)$ is the minimum in $\mathcal{F}(V)$. Let $C^*(v)\in \mathcal{F}(V)$ such that
\begin{center}
$(A(u)\rightarrow B(v))\rightarrow[(A^*(u)\rightarrow A(u))\rightarrow (A^*(u)\rightarrow C^*(v))]=1$.\\
\end{center}
By Proposition \ref{prop.2}(7), we have
\begin{center}
$A(u)\rightarrow B(v)\leq (A^*(u)\rightarrow A(u))\rightarrow (A^*(u)\rightarrow C^*(v))$.\\
\end{center}
And by using $\mathbf{(RP)}$ twice , we can get
\begin{center}
$A^*(u)\odot[(A(u)\rightarrow B(v))\odot (A^*(u)\rightarrow A(u))]\leq  C^*(v)$,\\
\end{center}
which implies that $C^*(v)$ is an upper bound of the set
\begin{center}
$\{A^*(u)\odot[(A^*(u)\rightarrow A(u))\odot(A(u)\rightarrow B(v))]|u\in U, v\in V\}$\\
\end{center}
that is, $ B^*(v)\leq C^*(v)$. Therefore, $B^*$ is the FMP-solution of quintuple implication principle.
\end{proof}

\begin{example}
\label{4.1.2}
\emph{Let us consider the following rule:}

\begin{center}
\emph{rule: If $u$ is $A$, then $v$ is $B$}
\end{center}
\begin{flushleft}
~~~~~~~~~~~~~~~~~~~~~~~~~~~~~~~~~~~~~~~~~~~~~~~~~~~~\emph{input: $A^*$}
\end{flushleft}
\begin{flushleft}
~~~~~~~~~~~~~~~~~~~~~~~~~~~~~~~~~~~~~~~~~~~~~~~~~~~~\emph{output: $B^*$.}
\end{flushleft}
\emph{where $A, A^*\in \mathcal{F}(U)$ with $U=\{u_1, u_2, u_3, u_4\}$ and $B, B^*\in \mathcal{F}(V)$ with $V=\{v_1, v_2, v_3, v_4\}$. These fuzzy sets are given by:}
\begin{center}
$A=\{(u_1, 0), (u_2, 0.4), (u_3, 0.7), (u_4, 1)\}$,
\end{center}
\begin{center}
$~B=\{(v_1, 0.2), (v_2, 0.5), (v_3, 0.9), (v_4, 1)\}$,
\end{center}
\begin{center}
$~~~~A^*=\{(u_1, 0.1), (u_2, 0.3), (u_3, 0.5), (u_4, 0.9)\}$.
\end{center}
\emph{The semi-overlap function we choose and its corresponding residual implication are}
\begin{center}
$u\odot v=min\{u,v\}=u\wedge v$\\
\end{center}
\emph{and}

\begin{center}
$u\rightarrow v$=$\left\{
  \begin{array}{ll}
    1, & if~u\leq v; \\
    v, & if~u> v.
  \end{array}
\right.$
\end{center}
\emph{Then we can calculate}

  \begin{eqnarray*}(1)~~~A^*(u)\rightarrow A(u)&=&[0.1\rightarrow 0 ~~0.3\rightarrow 0.4 ~~0.5\rightarrow 0.7 ~~0.9\rightarrow 1]\\&=&[0~~1~~1~~1].
\end{eqnarray*}
 \begin{eqnarray*}(2)~~~~~A(u)\rightarrow B(v)&=&\left[ {\begin{array}{*{20}{c}}
   0\rightarrow 0.2 & 0\rightarrow 0.5 & 0\rightarrow 0.9 & 0\rightarrow 1 \\
   0.4\rightarrow 0.2 & 0.4\rightarrow 0.5 & 0.4\rightarrow 0.9 & 0.4\rightarrow 1  \\
   0.7\rightarrow 0.2 & 0.7\rightarrow 0.5 & 0.7\rightarrow 0.9 & 0.7\rightarrow 1  \\
   1\rightarrow 0.2 & 1\rightarrow 0.5  & 1\rightarrow 0.9 & 1\rightarrow 1   \\
\end{array}} \right]\\&=&\left[ {\begin{array}{*{20}{c}}
   1 & 1 & 1 & 1 \\
   0.2 & 1 & 1 & 1  \\
   0.2 & 0.5 & 1 & 1  \\
   0.2 & 0.5  & 0.9 & 1   \\
\end{array}} \right].
\end{eqnarray*}
 \begin{eqnarray*}(3)~~~~~(A^*(u)\rightarrow A(u))\odot (A(u)\rightarrow B(v))&=&[0~~1~~1~~1]\wedge \left[ {\begin{array}{*{20}{c}}
   1 & 1 & 1 & 1 \\
   0.2 & 1 & 1 & 1  \\
   0.2 & 0.5 & 1 & 1  \\
   0.2 & 0.5  & 0.9 & 1   \\
\end{array}} \right]\\&=& \left[ {\begin{array}{*{20}{c}}
   0 & 0 & 0 & 0 \\
   0.2 & 1 & 1 & 1  \\
   0.2 & 0.5 & 1 & 1  \\
   0.2 & 0.5  & 0.9 & 1   \\
\end{array}} \right].
\end{eqnarray*}
\emph{Further, we can get}
\begin{eqnarray*}
B^*(v)&=& \sup\limits_{u\in U}\{A^*(u)\odot[(A^*(u)\rightarrow A(u))\odot(A(u)\rightarrow B(v))]\}\\&=&\sup\limits_{u\in U}\left\{[0.1~~ 0.3~~ 0.5~~ 0.9]\wedge \left[ {\begin{array}{*{20}{c}}
   0 & 0 & 0 & 0 \\
   0.2 & 1 & 1 & 1  \\
   0.2 & 0.5 & 1 & 1  \\
   0.2 & 0.5  & 0.9 & 1   \\
\end{array}} \right]\right\}\\&=&\sup\limits_{u\in U}\left\{\left[ {\begin{array}{*{20}{c}}
   0 & 0 & 0 & 0 \\
   0.2 & 0.3 & 0.3 & 0.3  \\
   0.2 & 0.5 & 0.5 & 0.5 \\
   0.2 & 0.5  & 0.9 & 0.9   \\
\end{array}} \right]\right\}\\&=&[0.2~~0.5~~0.9~~0.9].
\end{eqnarray*}
\emph{Hence, by Theorem \ref{4.1.1}, we have}
\begin{center}
$B^*(v)=\{(v_1, 0.2), (v_2, 0.5), (v_3, 0.9), (v_4, 0.9)\}$.
\end{center}
\end{example}

Next, we study the reducibility of QIP method of FMP problems. For FMP problems, reversibility means that if $A^*$ equals $A$, we hope that $B^*$ is equal to $B$.

\begin{theorem}
\label{4.1.3}
\emph{Let the implication $\rightarrow$ in FMP be a residual implication derived from the semi-overlap function $\odot$ with 1 as neutral element. If $A$ is a normal fuzzy set, that is  $\exists u_0\in U$ such that $A(u_0)=1$, then the FMP problem satisfies the reversibility of QIP method.}
\end{theorem}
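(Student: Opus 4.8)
The plan is to substitute $A^*=A$ directly into the closed-form FMP-solution established in Theorem~\ref{4.1.1}, which becomes
\begin{center}
$B^*(v)=\sup\limits_{u\in U}\{A(u)\odot[(A(u)\rightarrow A(u))\odot(A(u)\rightarrow B(v))]\}$,
\end{center}
and then to collapse the inner expression using the algebraic properties of the residual pair $(\odot,\rightarrow)$. Since $\odot$ has $1$ as neutral element, Proposition~\ref{prop.2}(7) gives that $\rightarrow$ satisfies $\mathbf{(OP)}$, hence the identity principle $\mathbf{(IP)}$, so that $A(u)\rightarrow A(u)=1$ for every $u\in U$. Combining the commutativity $(S1)$ of $\odot$ with the neutrality of $1$, the factor $1\odot(A(u)\rightarrow B(v))$ reduces to $A(u)\rightarrow B(v)$, so the solution simplifies to $B^*(v)=\sup_{u\in U}\{A(u)\odot(A(u)\rightarrow B(v))\}$, and it remains to show that this supremum equals $B(v)$.

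First I would establish the upper bound $B^*(v)\leq B(v)$. Writing the generic term as $SO(A(u),I_{SO}(A(u),B(v)))$, I would invoke Theorem~\ref{3.2.2}(iii): because $\odot$ is left-continuous by $(S5)$, the residual implication is attained as a maximum, $I_{SO}(u,v)=\max\{w\in[0,1]\mid SO(u,w)\leq v\}$. Substituting $w=I_{SO}(A(u),B(v))$ into this maximizing set yields the partial modus ponens inequality $SO(A(u),I_{SO}(A(u),B(v)))\leq B(v)$. Since this holds for every $u$, the supremum over $u$ is also bounded above by $B(v)$.

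For the matching lower bound I would use the normality hypothesis: there exists $u_0\in U$ with $A(u_0)=1$. The corresponding term is $1\odot(1\rightarrow B(v))=1\odot I_{SO}(1,B(v))$. Because $1$ is the neutral element, Proposition~\ref{prop.2}(2) guarantees that $\rightarrow$ satisfies $\mathbf{(NP)}$, so $I_{SO}(1,B(v))=B(v)$, and neutrality again gives $1\odot B(v)=B(v)$. Hence the $u_0$-term already equals $B(v)$, forcing $B^*(v)\geq B(v)$, and together with the upper bound we conclude $B^*(v)=B(v)$ for all $v\in V$, i.e. $B^*=B$.

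The routine algebra (the simplifications via neutrality and $\mathbf{(IP)}$) is straightforward; the two genuine ingredients are the partial modus ponens inequality and the role of normality. The upper bound relies essentially on $I_{SO}$ being realized as a \emph{maximum} rather than a mere supremum, which is exactly what left-continuity of the semi-overlap function secures through Theorem~\ref{3.2.2}. The hard part, conceptually, is the lower bound: without a point $u_0$ where $A(u_0)=1$ there is no reason for any single term of the supremum to reach $B(v)$, and the value could fall strictly below $B(v)$. Recognizing that normality is precisely the device that supplies a saturating index is the crux of the argument.
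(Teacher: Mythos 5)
Your proof is correct and follows essentially the same route as the paper's: substitute $A^*=A$, simplify via the neutral element and $A(u)\rightarrow A(u)=1$, obtain the upper bound $B^*(v)\leq B(v)$ from the residuation property (your partial modus ponens inequality is exactly what the paper extracts from $\mathbf{(RP)}$), and use the normal point $u_0$ to force the matching lower bound. The only cosmetic difference is that you cite specific items of Proposition~\ref{prop.2} and Theorem~\ref{3.2.2}(iii) where the paper invokes $\mathbf{(RP)}$ directly.
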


\begin{proof}
According to Theorem \ref{4.1.1}, FMP-solution of QIP method is the following formula:

\begin{center}
$B^*(v)=\sup\limits_{u\in U}\{A^*(u)\odot[(A^*(u)\rightarrow A(u))\odot(A(u)\rightarrow B(v))]\}$. \\
\end{center}
Suppose that $A^*=A$ is a normal fuzzy set, then we will prove that $B^*=B$. Since $\odot$ has 1 as neutral element and  $A(u)\rightarrow A(u)=1$, it follows that

\begin{center}
$(A(u)\rightarrow A(u))\odot(A(u)\rightarrow B(v))\leq A(u)\rightarrow B(v)$.\\
\end{center}
If $A^*=A$, then
\begin{center}
$(A^*(u)\rightarrow A(u))\odot(A(u)\rightarrow B(v))\leq A^*(u)\rightarrow B(v)$.\\
\end{center}
By $\mathbf{(RP)}$,  we can get
\begin{center}
$A^*(u)\odot((A^*(u)\rightarrow A(u))\odot(A(u)\rightarrow B(v)))\leq B(v)$.\\
\end{center}
It means that $B$ is an upper bound of the set
\begin{center}
$\{A^*(u)\odot(A^*(u)\rightarrow A(u))\odot(A(u)\rightarrow B(v))|u\in U, v\in V\}$.\\
\end{center}
Thus, $B^*(v)\leq B(v)$ for each $v\in V$.

On the other hand, if $A^*=A$ is a normal fuzzy set, that is, $\exists u_0\in U$ such that $A^*(u_0)=A(u_0)=1$, then
\begin{eqnarray*}B^*(v)&=&\sup\limits_{u\in U}\{A^*(u)\odot((A^*(u)\rightarrow A(u))\odot(A(u)\rightarrow B(v)))\}\\&\geq &A^*(u_0)\odot((A^*(u_0)\rightarrow A(u_0))\odot(A(u_0)\rightarrow B(v)))\\&=& 1\odot((1\rightarrow 1)\odot(1\rightarrow B(v)))\\&=& B(v).
\end{eqnarray*}
That is $B(v)\leq B^*(v)$.
\end{proof}

\begin{example}
\emph{Let us continue with Example \ref{4.1.2}. Now, if $A^*(u)=A(u)$, then}

\begin{center}
$A^*(u)\rightarrow A(u)=[1~~1~~1~~1]$.
\end{center}

\begin{center}
$(A^*(u)\rightarrow A(u))\odot (A(u)\rightarrow B(v))=A(u)\rightarrow B(v)$.\\
\end{center}
\emph{Hence, we can get}
\begin{eqnarray*}B^*(v)&=&\sup\limits_{u\in U}\{A^*(u)\odot((A^*(u)\rightarrow A(u))\odot(A(u)\rightarrow B(v)))\}\\&=& \sup\limits_{u\in U}\{A^*(u)\odot(A(u)\rightarrow B(v))\}\\&=&[0.2~~0.5~~0.9~~1].
\end{eqnarray*}
\emph{Therefore, $B^*(v)=B(v)$.}
\end{example}

\subsection{The quintuple implication principle of multiple-rules FMP problems }

In this subsection, we will apply the QIP method to multiple-rules fuzzy reasoning. Because in many practical applications, most control systems are composed of multiple-rules.

A general multiple-rules model is as follows:
\begin{center}
\begin{tabular}{l}
  \hline
  rule 1~~If $u_1$ is $A_{11}$ and $\cdots$ and $u_n$ is $A_{1n}$,~~then $v$ is $B_1$ \\

  rule 2~~If $u_1$ is $A_{21}$ and $\cdots$ and $u_n$ is $A_{2n}$,~~then $v$ is $B_2$ \\

  ~~~~~~~~~~~~~~~~~~~~~~~~~~~~~~~~$\vdots$ \\

  rule $m$~~If $u_1$ is $A_{m1}$ and $\cdots$ and $u_n$ is $A_{mn}$,~~then $v$ is $B_m$ \\

  input~~~ $u_1$ is $A^*_1$ ~~~~~and $\cdots$ and $u_n$ is $A^*_n$ \\

  output~~~~~~~~~~~~~~~~~~~~~~~~~~~~~~~~~~~~~~~~~~~~~~~~~~~~~~~~~~~~~~~~~$v$ is $B^*$. \\
  \hline\\
\end{tabular}\\
Model 1. General multiple-rules model
\end{center}
where $A_{ij}$, $A^*_j$ are fuzzy sets on  $U_j$, and $B_i$, $B^*$ are fuzzy sets on $V$, $i=1,2,\cdots,m$, $j=1,2,\cdots,n$.

Since the variable $u_1,u_2,\cdots,u_n$ can be considered as a multi-dimensional variable $u$, suppose that

\begin{center}
$u=(u_1,u_2,\cdots,u_n)\in U=U_1\times U_2\times\cdots\times U_n$.
\end{center}
Every rules can be looked as a fuzzy relationship on $U$, that is

\begin{center}
$A_i(u)=A_{i1}(u_1)\odot A_{i2}(u_2)\odot\cdots\odot A_{in}(u_n)$\\
\end{center}
Hence, the Model 1 can be simplified as Model 2:

\begin{center}
\begin{tabular}{l}
  \hline
  rule 1 ~~~~If $A_1(u)$, then $B_1(v)$ \\

  rule 2 ~~~~If $A_2(u)$, then $B_2(v)$ \\

  ~~~~~~~~~~~~~~~~~~~~~~~~~~ $\vdots$ \\

  rule $m$ ~~~If $A_m(u)$, then $B_m(v)$ \\

  input~~~~~~~~~~$A^*(u)$ \\

  output~~~~~~~~~~~~~~~~~~~~~~~~~~$B^*(v)$. \\
  \hline\\
\end{tabular}\\
Model 2\\
\end{center}

In fact, the variable $u$ in Model 2 can be as either a multi-dimensional variable or a one-dimensional variable. In the following, suppose that $u$ is a one-dimensional variable.

In fuzzy reasoning, the method often used is First Inference Then Aggregation(FITA). That is, first, the Model 2 is disassembled into the following FMP problems:

\begin{center}
\begin{tabular}{l}
  \hline
  rule~~~~If $A_i(u)$, then $B_i(v)$ \\

  input~~~~~~$A^*(u)$ \\

  output~~~~~~~~~~~~~~~~~~~$B^*_i(v)$.\\
  \hline\\
\end{tabular}
\end{center}
where $i=1,2,\cdots\cdots,n$. According to Theorem \ref{4.1.1}, the solution for every FMP problem is that
\begin{center}
$B^*_i(v)=\sup\limits_{u\in U}\{A^*(u)\odot[(A^*(u)\rightarrow A_i(u))\odot(A_i(u)\rightarrow B_i(v))]\}$.
\end{center}
for any $v\in V$. Then we can aggregate $B^*_i(v)$, $i=1,2,\cdots,n$, and get the solution of the Model 2:
\begin{eqnarray*}B^*(v)&=&\bigvee^m_{i=1}B^*_i(v)\\&=&\bigvee^m_{i=1}(\sup\limits_{u\in U}\{A^*(u)\odot((A^*(u)\rightarrow A_i(u))
\odot(A_i(u)\rightarrow B_i(v)))\}).
\end{eqnarray*}

The following example shows that reversibility of QIP method in Theorem \ref{4.1.3} is not applicable to multiple-rules FMP problems.

\begin{example}
\emph{Let $U=V=[0,1]$, $A_1, A_2, A^*\in \mathcal{F}(U)$, $B_1,B_2,B^*\in \mathcal{F}(V)$. Then we consider the following fuzzy rules:}

\begin{center}
\emph{rule 1: If $A_1(u)$, then $B_1(v)$}
\end{center}
\begin{center}
\emph{rule 2: If $A_2(u)$, then $B_2(v)$}\\
\end{center}
\emph{The correspond membership of the above fuzzy sets are given in the following:}

\begin{center}
~~$A_1=\{(u_1, 0.1), (u_2, 0.6), (u_3, 1)\}$,
\end{center}
\begin{center}
~$B_1=\{(v_1, 0.3), (v_2, 0.7), (v_3, 1)\}$,
\end{center}
\begin{center}
~~~$A_2=\{(u_1, 0.2), (u_2, 0.4), (u_3, 0.8)\}$,
\end{center}
\begin{center}
$B_2=\{(v_1, 0.1), (v_2, 0.8), (v_3, 1)\}$.\\
\end{center}
\emph{Suppose that $A^*=A_1$, we calculate the value of $B^*$. The semi-overlap function we choose and its corresponding residual implication are}

\begin{center}
$u\odot v=min\{u,v\}=u\wedge v$\\
\end{center}
\emph{and}

\begin{center}
$u\rightarrow v$=$\left\{
  \begin{array}{ll}
    1, & if~u\leq v; \\
    v, & if~u> v.
  \end{array}
\right.$\\
\end{center}
\emph{According to FITA, we can do it in three steps:}\\

\emph{(1) $B^*_1(v)=\sup\limits_{u\in U}\{A^*(u)\odot[(A^*(u)\rightarrow A_1(u))\odot(A_1(u)\rightarrow B_1(v))]\}=[0.3~~0.7~~1]$;}\\

\emph{(2) $B^*_2(v)=\sup\limits_{u\in U}\{A^*(u)\odot((A^*(u)\rightarrow A_2(u))\odot(A_2(u)\rightarrow B_2(v)))\}=[0.1~~0.8~~0.8]$;}\\

\emph{(3) $B^*(v)=B^*_1(v)\vee B^*_2(v)=[0.3~~0.8~~1]$.}\\
\emph{Hence, $B^*(v)\neq B(v)$.}
\end{example}

In the following, we will study the reducibility of multiple-rules, which is based on similarity measure. Therefore, we introduce the definition of similarity measure.

\begin{definition}(\cite{Bustince4})
\emph{A function $S: \mathcal{F}(U)\times \mathcal{F}(U)\rightarrow [0,1]$ is called a similarity measure on $\mathcal{F}(U)$, if $S$ has the following properties:}

\emph{$(1)$ $S(A,B)=S(B,A)$, for all $A,B\in \mathcal{F}(U)$;}

\emph{$(2)$ $S(A,A^c)=0$ if and only if $A$ is crisp;}

\emph{$(3)$ $S(A,B)=1$ if and only if $A=B$;}

\emph{$(4)$ For all $A,B,C,D\in \mathcal{F}(U)$, if $A\leq B\leq C\leq D$, then $S(A,B)\leq S(C,D)$;}

\emph{$(5)$ $S(A,B)=S(A^c,B^c)$.}
\end{definition}

\begin{example}(\cite{Bustince4})
\emph{Let $A,B\in \mathcal{F}(U)$. Then }

\begin{center}
  $S(A,B)=\frac{1}{n}\sum\limits_{i=1}^n1-|A(u_i)-B(u_i)|$.
\end{center}
\emph{is a similarity measure.}
\end{example}

Then, the fuzzy reasoning algorithm based on similarity measure is as follows:\\

\begin{tabular}{l}
  \hline
  $Algorithm~~ 1$ \\
  \hline
  $\bullet$~Input: A set of rules $R_i$ with $i\in\{1,\cdots, m\}$ and a fact $A^*_j$ with $j\in\{1,\cdots, n\}$ \\
  $\bullet$~Output: $B^*$. \\
  $\bullet$~According to $A^*_j, A^*_j\rightarrow A_{ij}$ and $A_{ij}\rightarrow B_i$, we can calculate that:\\
  ~~~~~~~~~~~~~~~~~~~~~~~~~~~~~$B^*_{ij}=\sup\limits_{x\in X}\{A^*_j\odot((A^*_j\rightarrow A_{ij})\odot (A_{ij}\rightarrow B_i))\}$.\\
  $\bullet$~Making the union of these $m$ results, \\
  ~~~~~~~~~~~~~~~~~~~~~~~~~~~~$B^*_i=\cup B^*_{ij}$ \\
  $\bullet$~Calculate the average similarity. Let $S_{ij}$ be the similarity measure between $A_{ij}$ and $A^*_j$, \\
  ~~~and the average similarity $S_i$ is:\\
  ~~~~~~~~~~~~~~~~~~~~~~~~~~~$S_i=\frac{1}{n}(S_{i1}+S_{i2}+\cdots +S_{in})$.\\
  $\bullet$~We give weight to rule $i$.\\
  ~~~~~~~~~~~~~~~~~~~~~~~~~~~$w_i=\left\{
  \begin{array}{ll}
    \frac{1-S}{2}, & i\notin I \\
    \frac{1+S}{2}, & i\in I.\\
  \end{array}
  \right.$\\
  ~~~where $S=max\{S_1, S_2, \cdots, S_n\}$ and $I=\{i|S_i=S, 1\leq i\leq n\}$.\\
  $\bullet$~Calculation reasoning results.\\
  ~~~~~~~~~~~~~~~~~~~~~~~~~~~~~$B^*=\cup w_iB^*_i$.\\
  \hline\\
 \end{tabular}

\begin{theorem}
\emph{Let $A_{ij}(i=1,2,\cdots,m; j=1,2,\cdots, n)$ be a normal fuzzy set, that is exist a $u_{ij}\in U_j$ such that $A_{ij}(u_{ij})=1$. Then when $A^*_j=A_{i_0j}$, we can get $B^*=B_{i_0}$ according to the above algorithm.}
\end{theorem}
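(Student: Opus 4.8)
The plan is to reduce the multiple-rules statement to the single-rule reversibility already proved in Theorem \ref{4.1.3}, and then to show that the similarity-based weights collapse the aggregation step onto the matching rule $i_0$. Throughout I read $\cup$ as the pointwise supremum and $w_i B_i^*$ as the scalar product $w_i\cdot B_i^*(v)$, in line with the conventions of Algorithm~1.

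First I would compute the partial output $B^*_{i_0}$ attached to the matching rule. Fix an antecedent index $j$. Since $A^*_j=A_{i_0 j}$ and $A_{i_0 j}$ is by hypothesis normal, the single-antecedent FMP problem with rule antecedent $A_{i_0 j}$, consequent $B_{i_0}$ and input $A^*_j$ meets exactly the hypotheses of Theorem \ref{4.1.3}. Hence $B^*_{i_0 j}=B_{i_0}$ for every $j\in\{1,\dots,n\}$, and taking the union over $j$ gives $B^*_{i_0}=\bigcup_{j} B^*_{i_0 j}=B_{i_0}$.

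Next I would analyse the weights. By property $(3)$ of a similarity measure, $S_{i_0 j}=S(A_{i_0 j},A^*_j)=S(A_{i_0 j},A_{i_0 j})=1$ for each $j$, so the average similarity is $S_{i_0}=\frac1n\sum_{j} S_{i_0 j}=1$. As every similarity value lies in $[0,1]$ we have $S_i\le 1$ for all $i$, whence $S=\max_i S_i=1$ and $i_0\in I$. Substituting $S=1$ into the weight formula yields $w_{i_0}=\frac{1+S}{2}=1$ and $w_i=\frac{1-S}{2}=0$ for every $i\notin I$.

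Finally I would assemble $B^*=\bigcup_i w_i B^*_i$. The terms with $i\notin I$ vanish because $w_i=0$, so $B^*=\bigcup_{i\in I} B^*_i$, which already gives $B^*\ge w_{i_0}B^*_{i_0}=B_{i_0}$. The delicate point I expect is the reverse inclusion in the presence of ties, i.e. indices $i\in I\setminus\{i_0\}$: for such $i$ the equality $S_i=1$ forces each $S_{ij}=1$, hence $A_{ij}=A^*_j=A_{i_0 j}$ for all $j$, and applying Theorem \ref{4.1.3} once more gives $B^*_i=B_i$. Thus every tied rule shares the antecedents of rule $i_0$, and in the generic case where $i_0$ is the unique best match (distinct antecedents, $I=\{i_0\}$) one concludes at once $B^*=B^*_{i_0}=B_{i_0}$. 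The main obstacle is therefore to justify the tie case: invoking rule-base consistency, that rules with identical antecedents carry identical consequents, yields $B_i=B_{i_0}$ for all $i\in I$, so the union reduces to $B_{i_0}$ and the proof closes.
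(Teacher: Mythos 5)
Your proof follows essentially the same route as the paper's: apply the single-rule reversibility result (Theorem \ref{4.1.3}) to each antecedent to get $B^*_{i_0}=B_{i_0}$, use property (3) of the similarity measure to force $S_{i_0}=S=1$ and hence $w_{i_0}=1$, and then collapse the weighted union. The one point where you go beyond the paper is the tie case: the paper simply asserts $w_i=0$ for all $i\neq i_0$, which tacitly assumes $I=\{i_0\}$, whereas you correctly note that any rule $i$ with $S_i=1$ also receives weight $1$ (and must then have $A_{ij}=A_{i_0j}$ for all $j$), so that closing the argument in general requires the additional, unstated hypothesis that rules with identical antecedents carry identical consequents.
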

\begin{proof}
Suppose that for $i_0$($1\leq i_0\leq m$),  $A^*_j=A_{i_0j}$, where $j=1,2,\cdots,n$. By Theorem 4.4, we can get $B^*_{i_0j}=B_{i_0}$, hence $B^*_{i_0}=\cup B^*_{i_0j}=B_{i_0}$. If $A^*_j=A_{i_0j}$, we have $S_{i_0j}(A_{i_0j},A^*_j)=1$.
 Therefore, $S_{i_0}=\frac{1}{n}(S_{i_01}+S_{i_02}+\cdots +S_{i_0n}) =1$. By algorithm 1,  $S=max\{S_1, S_2,\cdots,S_{i_0},\cdots,S_n\}$, we have $S=S_{i_0}=1$, thus, $w_{i_0}=1$, $w_i=0$, where $i=1,2\cdots,m$ and $i\neq i_0$. Hence $B^*=\cup w_iB^*_i=B_{i_0}$.
\end{proof}

\section{An application of semi-overlap functions and quintuple implication principle in classification problems}

In this section, we apply the combination of semi-overlap functions and quintuple implication principle to classification problems. Firstly, we introduce fuzzy association rule-based classification method for high-dimensional problems ($\mathbf{FARC-HD}$, for short). Then, based on semi-overlap functions and quintuple implication principle, we propose a new fuzzy classification algorithm  which called $\mathbf{SO5I-FRC}$. Finally, we apply the two methods to the classification problem respectively. And the experimental results show that the classification accuracy of SO5I-FRC algorithm is higher than that of FARC-HD algorithm.

\subsection{Fuzzy rule-based classification systems}

In the literature, there are multiple techniques used to solve classification problems. Among them, Fuzzy Rule-Based Classification Systems (FRBCSs, for short) are one of the most popular approaches, since they provide an interpretable model by means of the use of linguistic labels in their rules\cite{Ishibuchi}. The main purpose of rule-based classification algorithm (fuzzy or clear rules) is to classify the samples that satisfied  certain conditions in the data set. Here, we will introduce two fuzzy rule based classification algorithms.

$\bullet$ $\mathbf{FARC-HD}$

J.Alcal\'{a}-Fdes(\cite{AlcalaFdez}) propose a fuzzy association rule based classification method for high-dimensional problems(FARC-HD) to obtain an accurate and compact fuzzy rule-based classifier with a low computational cost. In this paper, we only consider the classification problem with two variables. Hence, we will give the rule structure and algorithm with two variables in FARC-HD. The rule structure is as follows:

\begin{center}
Rules $R_j$: If $u_1$ is $A_{j1}$ and  $u_2$ is $A_{j2}$,
\end{center}
\begin{center}
~~~~~~~~~~~~~~~~then Class is $C_j$ with $RW_j$\\
\end{center}
where $R_j(j=1,2,\cdots,m)$ is the label of the $j$-th rule, $u=(u_1,u_2)$ is a vector representing the example, $A_{j1}$ and $A_{j2}$ is a linguistic label modeled by a triangular membership function, $C_j$ is the class label and $RW_j$ is the rule weight computed using the certainty factor defined in \cite{Ishibuchi1}.

The FARC-HD algorithm is as follows:\\

\begin{tabular}{l}
  \hline
  $Algorithm~~2$ \\
  \hline
  $\bullet$~Input: A set of rules $R_j$ and rule weight $RW_j$ and a sample $u^*=(u_1, u_2)$  \\
  $\bullet$~Output: $C_j$. \\
  \hline\\
 \end{tabular}

\begin{tabular}{l}
  \hline
  $Algorithm~~2$ \\
  \hline
  $\bullet$~Matching degree. Calculate the matching degree between the antecedent for all rules \\
  ~~~and the samples $u^*$.\\
  ~~~~~~~~~~~~~~~~~~~~~~~~~$\mu_{A_j}(u^*)=O(\mu_{A_{j1}}(u^*_1),\mu_{A_{j2}}(u^*_2))$.\\
  $\bullet$~ Association degree. Calculate the association degree between the sample $u^*$ and \\
  ~~~each rule in the rule base.\\
  ~~~~~~~~~~~~~~~~~~~~~~~~~$b_j(u^*)=\mu_{A_j}(u^*)\cdot RW_j$\\
  $\bullet$~Confidence degree. Calculate the confidence degree of each class\\
  ~~~~~~~~~~~~~~~~~~~~~~~~~$conf_i(u^*)=\sum b_j(u^*)$\\
  $\bullet$~Classification. Predict the class with the highest confidence.\\
  ~~~~~~~~~~~~~~~~~~~~~~~~~Class=arg $max\{conf_i(u^*)\}$.\\
  \hline\\
 \end{tabular}

In algorithm 2, $O$ represents binary overlap functions. In fact, in \cite{Elkano}, based on FARC-HD algorithm, multi-class classification problems is studied by $n$-dimensional overlap functions, and good results are obtained. In this paper, we study the classification problem with two variables, hence we choose the binary overlap functions.

$\bullet$ $\mathbf{SO5I-FRC}$

Based on semi-overlap functions and QIP method introduced in this paper, we propose a new fuzzy classification algorithm, which called SO5I-FRC, for short. The main idea of SO5I-FRC algorithm is as follows: we apply the semi-overlap functions and their residual implications to the QIP method and calculate the matching degree between an input sample and each rules in fuzzy rule set, then the class of the rule with the highest matching degree is the classification result of this sample. The difference between SO5I-FRC algorithm and FARC-HD algorithm is that in FARC-HD algorithm, we only need to calculate the matching degree between input $A^*$ and the antecedent $A_{ji}$ of the rule, however, in SO5I-FRC algorithm, we need to calculate the matching degree between $A^*$ and the whole rule, including the matching degree of $A_{ji}$ and $B_j$. The rule structure of SO5I-FRC algorithm is as follows: (we only consider the classification problem with two variables)

\begin{center}
~~Rules $R_j$: If $u_1$ is $A_{j1}$ and $u_2$ is $A_{j2}$,
\end{center}
\begin{center}
then $C_j$ is $B_j$.\\
\end{center}
where $R_j(j=1,2,\cdots,m)$ is the $j$-th rule of the fuzzy rule set; $A_{j1}$ and $A_{j2}$ are two one-dimensional fuzzy sets of the antecedents of fuzzy rules; $C_j$ is the classification label of the sample satisfying fuzzy sets $A_{j1}$ and $A_{j2}$; $B_j$ is the degree to which $C_j$ belongs to class 1 or class 2, and it is a single point fuzzy set.

The SO5I-FRC algorithm is as follows:\\

\begin{tabular}{l}
  \hline
  $Algorithm~~3$ \\
  \hline
  $\bullet$~Input: A set of rules $R_j$ and a sample $u^*=(u_1, u_2)$  \\
  $\bullet$~Output: $C_j$. \\
  $\bullet$~Calculating two-dimensional fuzzy sets $A^*$ and $A_j$\\
  ~~~~~~~~~~~~~~~~~~~~$A^*(u_1,u_2)=min\{A^*_{1}(u_1), A^*_{2}(u_2)\}$;\\
  ~~~~~~~~~~~~~~~~~~~~$A_j(u_1,u_2)=min\{A_{j1}(u_1), A_{j2}(u_2)\}$.\\
  $\bullet$~Calculating the matching degree between the rules $R_j$ and the antecedent $A^*$\\
  ~~~~~~~~~~~~~~~~~~~~$B_j^*(v)=\sup\limits_{u\in U}\{A^*(u)\odot((A^*(u)\rightarrow A_j(u))\odot(A_j(u)\rightarrow B_j(v)))\}$\\
  $\bullet$~The classification result of $A^*$ is the class of the rule with the highest matching degree\\
  ~~~~~~~~~~~~~~~~~~~~Class=arg $max\{B_j^*(v)\}$.\\
  \hline
 \end{tabular}

\subsection{Experimental framework}

$\bullet$ $\mathbf{step~ 1}$. Selection of data sets

We will select the Banana data set in the KEEL data set repository(https://sci2s.ugr.es/keel/\\category.php?cat=cla), which is a binary classified data set with two numerical attributes, as shown in Table 2.

\begin{center}
\begin{tabular}{cccc}
  \hline
  number & attribute 1 & attribute 2 & class \\
  \hline
  $1$ & $1.14$   & $-0.114$  & $1$ \\
  $2$ & $-1.52$  & $-1.15$   & $2$ \\
  $3$ & $-1.05$  & $0.72$    & $1$ \\
  $4$ & $-0.916$ & $0.397$   & $2$ \\
  $5$ & $-1.09$  & $0.437$   & $2$ \\
  $6$ & $-0.584$ & $0.0937$  & $2$ \\
  $\cdots$ & $\cdots$ & $\cdots$ & $\cdots$ \\
  $5298$ & $1.57$   & $-0.389$  & $1$ \\
  $5299$ & $-0.411$ & $0.727$   & $2$ \\
  $5300$ & $-0.72$  & $0.189$   & $2$ \\
  \hline\\
\end{tabular}\\
Table 2. Banana data set
\end{center}

The distribution of the two classes is shown in Figure \ref{fig.5.1}. Here, the red is the data of the first class and the green is the data of the second class.

\begin{figure}[!ht]
  \centering
  \includegraphics[width=10.0cm,height=7.0cm]{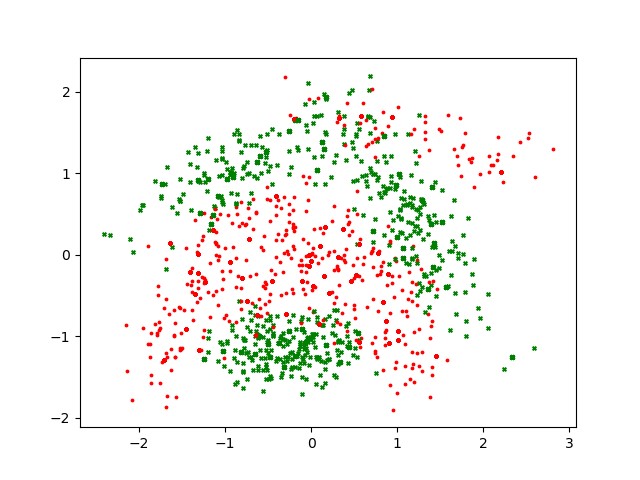}\\
  \caption{Classification distribution}\label{fig.5.1}
\end{figure}

$\bullet$ $\mathbf{step~ 2}$. Establish accurate rule set

According to RIPPER algorithm, the rule set (I) of the Banana data set is obtained:

(1) if $-2.41\leq u_1\leq -1.26$ and $-1.9\leq u_2\leq 0.52$, then class is 1;

(2) if $-0.9\leq u_1\leq 0.6$ and $0.52\leq u_2\leq 0.8$, then class is 1;

(3) if $-1.26 \leq u_1\leq 0.9$ and $-0.65 \leq u_2\leq 0.52$, then class is 1;

(4) if $0.9\leq u_1\leq 1.23$ and $-0.65\leq u_2\leq -0.15$, then class is 1;

(5) if $0.56\leq u_1\leq 1.6$ and $-1.9\leq u_2\leq -0.65$, then class is 1;

(6) if $1.3\leq u_1\leq 2.81$ and $0.81\leq u_2\leq 2.2$, then class is 1;

otherwise, class is 2.

According to the actual classification result and data set characteristics, we subdivide "otherwise, class is 2." in the above rule set (I) into the following 7 rules:

(7) if $-2.41\leq u_1\leq -0.9$ and $0.52\leq u_2\leq 2.2$, then class is 2;

(8) if $-0.9\leq u_1\leq 0.6$ and $0.8\leq u_2\leq 2.2$, then class is 2;

(9) if $0.6 \leq u_1\leq 1.3$ and $0.52\leq  u_2\leq 2.2$, then class is 2;

(10) if $0.9\leq u_1\leq 1.23$ and $-0.15\leq u_2\leq 0.52$, then class is 2;

(11) if $-1.26\leq u_1\leq 0.56$ and $-1.9\leq u_2\leq -0.65$, then class is 2;

(12) if $1.23\leq u_1\leq 2.81$ and $-0.65\leq u_2\leq 0.81$, then class is 2;

(13) if $1.6\leq u_1\leq 2.81$ and $-1.9\leq u_2\leq -0.65$, then class is 2.

Rule set (II) is composed of 6 rules in rule set (I) and 7 rules obtained by subdividing "otherwise, class is 2". Our experiment is based on the 13 rules in rule set (II), and applies SO5I-FRC algorithm and FARC-HD algorithm to carry out classification experiment, respectively.

$\bullet$ $\mathbf{step~ 3}$. Fuzzification methods of major premise (rules) and minor premise

Fuzzification is to convert the determined real number input into fuzzy quantity, that is, fuzzy set and corresponding membership function. Here, we need to fuzzify each rules in rule set (II) to obtain the fuzzy rule set (III).
When the condition of variable $u_i$ of rules $R_j$ in rule set (II) is $a \leq u_i \leq b$, the membership function in fuzzy rule set (III) is the trapezoidal membership function $T(a,b)$ shown in Fig 3. Fuzzy set $A_{ji}(i=1,2)$ is shown in the following, where the domain of $u_1$ is $[- 2.41, 2.81]$ and the domain of $u_2$ is $[- 1.9, 2.2]$.

\begin{center}
$A_{ji}(u)=\left\{
  \begin{array}{ll}
    1, & if~ a+0.1\leq u_i\leq b-0.1 \\
    \frac{2(u-a+0.25)}{7}, & if~ a-0.25\leq u_i\leq a+0.1\\
    \frac{-2(u-b-0.25)}{7}, & if~ b-0.1\leq u_i\leq a+0.25\\
    0, & otherwise
  \end{array}
\right.$
\end{center}
\begin{figure}[!ht]
  \centering
  \includegraphics[width=10.0cm,height=8.0cm]{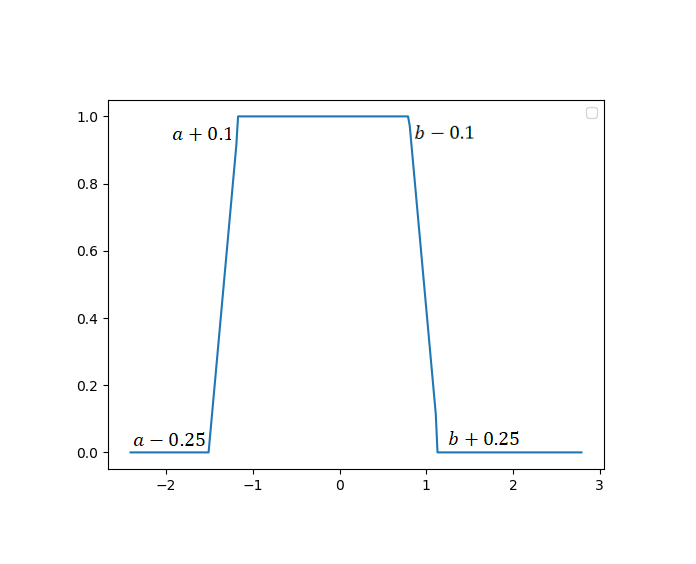}
  \caption{Trapezoidal membership function}\label{fig.3}
\end{figure}

The membership degree of the fuzzy rule consequent is the fuzzy set $B_j=\{a\}$,  the value of $a$ is the classification accuracy of all points in the area determined by variables $u_1$ and $u_2$ of $R_j$ in rule set (II). That is

\begin{center}
$B_1=\{0.891|C_j=1\}$,  $B_2=\{0.941|C_j=1\}$, $B_3=\{0.896|C_j=1\}$,
\end{center}
\begin{center}
$B_4=\{0.923|C_j=1\}$, $B_5=\{0.958|C_j=1\}$,  $B_6=\{0.637|C_j=1\}$,
\end{center}
\begin{center}
$B_7=\{0.996|C_j=2\}$,  $B_8=\{0.778|C_j=2\}$, $B_9=\{0.795|C_j=2\}$,
\end{center}
\begin{center}
~~$B_{10}=\{0.968|C_j=2\}$, $B_{11}=\{0.794|C_j=2\}$,  $B_{12}=\{0.987|C_j=2\}$
\end{center}
\begin{flushleft}
~~~~~~~~~~~~~~~~~~~~~~$B_{13}=\{0.983|C_j=2\}$.
\end{flushleft}

Next, we fuzzify the minor premise $A^*_{ji}$. The membership function expression of $A^*_{ji}$ and its membership function graph(Fig.4) are as follows

\begin{center}
$A^*_{ji}(u)=\left\{
  \begin{array}{ll}
    4u-4a+1, & if~ a-0.25\leq u_i\leq a \\
    -4u+4a+1, & if~ a\leq u_i\leq a+0.25 \\
    0, & otherwise
  \end{array}
\right.$
\end{center}
\begin{figure}[!ht]
  \centering
  \includegraphics[width=10.0cm,height=7.0cm]{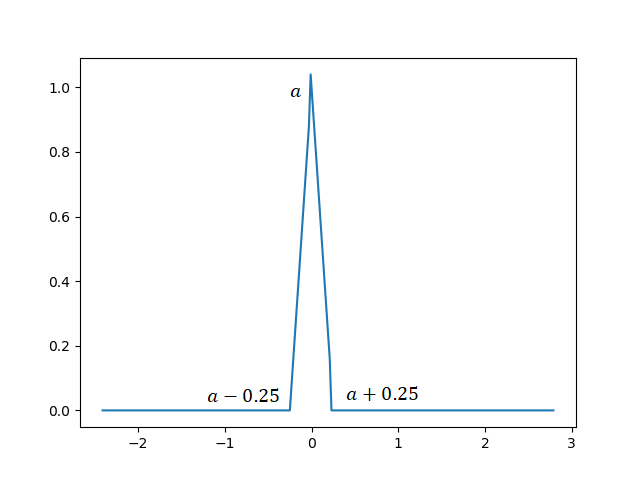}\\
  \caption{Triangular membership function}\label{fig.5}
\end{figure}

\subsection{Experimental results and analysis}

(1) In rule set (III),  we use FARC-HD algorithm to classify banana data set. We select the following  overlap functions for experiments and analyze the results.

\begin{center}
~~~~~~~~~~$O_a: u\odot_a v=min\{u,v\}$;
\end{center}
\begin{center}
$O_b: u\odot_b v=uv$;
\end{center}
\begin{center}
~~~~~$O_c: u\odot_c v=\frac{uv(u+v)}{2}$;
\end{center}
\begin{center}
~~~~~~~~~~~~~~$O_d: u\odot_d v=min\{\sqrt{u},\sqrt{v}\}$;
\end{center}
\begin{center}
~~~~~~~~$O_e: u\odot_e v=min\{u^2, v^2\}$.
\end{center}

In the following table, we will give the membership functions of variables $u_1$ and $u_2$ and the rule weight corresponding to each rules.

\begin{center}
\begin{tabular}{c|l|c}
  \hline
  rules & membership function of $u_1$/$u_2$    & rule weight \\
  \hline
  $1$  &  $Tra_{u_1}(-2.66,-2.31,-1.36,-1.01)$               & $1/6$\\
   ~   &  $Tra_{u_2}(-2.15,-1.8,0.42,0.77)$                  &~\\
  \hline
  $2$  & $Tra_{u_1}(-1.15,-0.8,0.5,0.85)$                    & $1/6$\\
   ~   & $Tra_{u_2}(0.27,0.62,0.7,1.05)$                     &~\\
  \hline
  $3$  & $Tra_{u_1}(-1.51,-1.16,0.8,1.15)$                   & $1/6$\\
  ~    & $Tra_{u_2}(-0.9,-0.55,0.42,0.77)$                   &~\\
  \hline
  $4$  & $Tra_{u_1}(0.65,1.0,1.13,1.48)$                     & $1/6$\\
  ~    & $Tra_{u_2}(-0.9,-0.55,-0.25,0.1)$                   &~\\
  \hline
  $5$  & $Tra_{u_1}(0.31,0.66,1.5,1.85)$                     & $1/6$\\
  ~    & $Tra_{u_2}(-2.15,-1.8,-0.75,-0.4)$                  &~\\
  \hline
  $6$  & $Tra_{u_1}(1.05,1.4,2.71,3.06)$                     & $1/6$\\
  ~    & $Tra_{u_2}(0.56,0.91,2.1,2.45)$                     &~\\
  \hline
  $7$  & $Tra_{u_1}(-2.66,-2.31,-1.0,-0.65)$                 & $1/7$\\
  ~    & $Tra_{u_2}(0.27,0.62,2.1,2.45)$                     &~\\
  \hline
  $8$  & $Tra_{u_1}(-1.15,-0.8,0.5,0.85)$                    & $1/7$\\
  ~    & $Tra_{u_2}(0.55,0.9,2.1,2.45)$                      &~\\
  \hline
  $9$  & $Tra_{u_1}(0.35,0.7,1.2,1.55)$                      & $1/7$\\
  ~    & $Tra_{u_2}(0.27,0.62,2.1,2.45)$                     &~\\
  \hline
  $10$ & $Tra_{u_1}(0.65,1.0,1.13,1.48)$                     & $1/7$\\
  ~    & $Tra_{u_2}(-0.4,-0.05,0.42,0.77)$                   &~\\
  \hline
  $11$ & $Tra_{u_1}(-1.51,-1.16,0.46,0.81)$                  & $1/7$\\
  ~    & $Tra_{u_2}(-2.15,-1.8,-0.75,-0.4)$                  &~\\
  \hline
  $12$ & $Tra_{u_1}(0.98,1.33,2.71,3.06)$                    & $1/7$\\
  ~    & $Tra_{u_2}(-0.9,-0.55,0.71,1.06)$                   &~\\
  \hline
  $13$ & $Tra_{u_1}(1.35,1.7,2.71,3.06)$                     & $1/7$\\
  ~    & $Tra_{u_2}(-2.15,-1.8,-0.75,-0.4)$                 &~\\
  \hline
  \end{tabular}\\
  $~$\\
  Table 3. Membership functions and rule weight
 \end{center}

In the following, we will give the classification accuracy results. The Table 4 shows that the classification accuracy obtained by selecting the same overlap function for 13 rules, and the Table 5 shows the classification accuracy obtained by selecting different overlap functions for 13 rules.

\begin{center}
\begin{tabular}{c|c|c}
  \hline
  experiments & overlap functions &  accuracy\\
  \hline
  Exp 1 & $O_a$ & $75.09\%$ \\
  \hline
  Exp 2 & $O_b$ & $74.25\%$ \\
  \hline
  Exp 3 & $O_c$ & $74.23\%$ \\
  \hline
  Exp 4 & $O_d$ & $72.94\%$ \\
  \hline
  Exp 5 & $O_e$ & $74.28\%$ \\
  \hline
  \end{tabular}\\
$~$\\
 Table 4. Classification accuracy results by selecting \\
  the same overlap function
 \end{center}

From the Table 4, we can get that using FARC-HD algorithm and selecting the same overlap function for 13 rules, the classification accuracy of the five groups experiments are 75.09\%, 74.25\%, 74.23\%, 72.94\% and 74.28\%, respectively, and the average accuracy is 74.158\%.

\begin{center}
\begin{tabular}{c|c|c|c|c|c}
  \hline
  rules & Exp 1 & Exp 2 & Exp 3 & Exp 4 & Exp 5\\
  \hline
  $1$  & $O_c$  & $O_c$  & $O_c$  & $O_a$  & $O_c$\\
  $2$  & $O_b$  & $O_b$  & $O_d$  & $O_e$  & $O_c$\\
  $3$  & $O_a$  & $O_c$  & $O_a$  & $O_b$  & $O_d$\\
  $4$  & $O_a$  & $O_a$  & $O_a$  & $O_a$  & $O_a$\\
  $5$  & $O_d$  & $O_e$  & $O_c$  & $O_a$  & $O_e$\\
  $6$  & $O_d$  & $O_d$  & $O_c$  & $O_d$  & $O_a$\\
  $7$  & $O_c$  & $O_a$  & $O_e$  & $O_a$  & $O_d$\\
  $8$  & $O_b$  & $O_d$  & $O_a$  & $O_e$  & $O_a$\\
  $9$  & $O_e$  & $O_c$  & $O_c$  & $O_c$  & $O_c$\\
  $10$ & $O_a$  & $O_a$  & $O_b$  & $O_a$  & $O_a$\\
  $11$ & $O_d$  & $O_b$  & $O_a$  & $O_b$  & $O_c$\\
  $12$ & $O_a$  & $O_a$  & $O_c$  & $O_c$  & $O_c$\\
  $13$ & $O_e$  & $O_d$  & $O_c$  & $O_d$  & $O_b$\\
  \hline
  accuracy   & $76.92\%$    & $75.06\%$    & $75.11\%$   & $74.08\%$   & $74.92\%$\\
  \hline
  \end{tabular}\\
  $~$\\
  Table 5. Classification accuracy results by selecting \\
  the different overlap functions
\end{center}

From the Table 5, we can get that using FARC-HD algorithm and selecting the different overlap functions for 13 rules, the classification accuracy of the five groups experiments are 76.92\%, 75.06\%, 75.11\%, 74.08\% and 74.92\%, respectively, and the average accuracy is 75.218\%. It is easy to see that the average accuracy of selecting different overlap functions for 13 rules is higher than that of selecting the same overlap function.

(2)  In rule set (III), we use SO5I-FRC algorithm to classify banana data set. When calculating the matching degree between $A^*$ and rule $R_j$, the following four groups of semi-overlap functions with good experimental performance can be arbitrarily selected for 13 rules, with a total of $\rm{C}_{13}^4$ choices. It should be noted that the implication and semi-overlap functions in each rules must be the same semi-overlap function and its corresponding implication.

$SO_1$:~~~~~~ $u\odot_1 v=\left\{
  \begin{array}{ll}
    0, & if~ u+v\leq 1 \\
    uv, & otherwise
  \end{array}\\
\right.$\\

~~~~~~~~~~~~~~~$u\rightarrow_1 v=\left\{
  \begin{array}{ll}
    1, & if~ u\leq v \\
    max\{1-u, \frac{v}{u}\}, & otherwise
  \end{array}\\
\right.$\\

$SO_2$:~~~~~~$u\odot_2 v=max\{\frac{u+v-1}{u+v-uv},0\}$\\

~~~~~~~~~~~~~~$u\rightarrow_2 v=\left\{
  \begin{array}{ll}
    1, & if~ u\leq v \\
    \frac{uv+1-u}{uv+1-v}, & otherwise
  \end{array}\\
\right.$\\

$SO_3$:~~~~~~$u\odot_3 v=min\{u,v\}$\\

~~~~~~~~~~~~~~$u\rightarrow_3 v=\left\{
  \begin{array}{ll}
    1, & if~ u\leq v \\
    v, & otherwise
  \end{array}\\
\right.$\\

$SO_4$:~~~~~~$u\odot_4 v=
\left\{
  \begin{array}{ll}
    \frac{1+min\{2u-1,2v-1\}max\{(2u-1)^2, (2v-1)^2\}}{2}, & u,v\in(0.5,1] \\
    min\{u,v\}, & otherwise
  \end{array}\\
\right.$\\

~~~~~~~~~~~~~$u\rightarrow_4 v=
\left\{
  \begin{array}{ll}
    min\{1, max\{\frac{\sqrt{2v-1}}{2\sqrt{2u-1}}, \frac{v}{2(2u-1)^2}\}+\frac{1}{2}\},& u\in(0.5,1],\\
    ~ & v\in[0.5,1]\\
    v, & v\in[0,0.5), u> v\\
    1, & u\in[0,0.5], u\leq v
  \end{array}\\
\right.$\\
$~$

In the following table, we will give the membership function of variables $u_1$ and $u_2$ in each rules.

\begin{center}
\begin{tabular}{c|l}
  \hline
  rules & membership function of $x_1$/$x_2$   \\
  \hline
  $1$  &  $Tra_{u_1}(-2.66,-2.31,-1.36,-1.01)$,~~~~ $Tra_{u_2}(-2.15,-1.8,0.42,0.77)$  \\
  \hline
  $2$  & $Tra_{u_1}(-1.15,-0.8,0.5,0.85)$,~~~~~~~~~~~~~ $Tra_{u_2}(0.27,0.62,0.7,1.05)$  \\
  \hline
  $3$  & $Tra_{u_1}(-1.51,-1.16,0.8,1.15)$,~~~~~~~~~~~ $Tra_{u_2}(-0.9,-0.55,0.42,0.77)$ \\
  \hline
  $4$  & $Tra_{u_1}(0.65,1.0,1.13,1.48)$,~~~~~~~~~~~~~~~~ $Tra_{u_2}(-0.9,-0.55,-0.25,0.1)$  \\
  \hline
  $5$  & $Tra_{u_1}(0.31,0.66,1.5,1.85)$,~~~~~~~~~~~~~~~~ $Tra_{u_2}(-2.15,-1.8,-0.75,-0.4)$ \\
  \hline
  $6$  & $Tra_{u_1}(1.05,1.4,2.71,3.06)$,~~~~~~~~~~~~~~~~ $Tra_{u_2}(0.56,0.91,2.1,2.45)$  \\
  \hline
   $7$  & $Tra_{u_1}(-2.66,-2.31,-1.0,-0.65)$,~~~~~~ $Tra_{u_2}(0.27,0.62,2.1,2.45)$ \\
  \hline
  $8$  & $Tra_{u_1}(-1.15,-0.8,0.5,0.85)$,~~~~~~~~~~~~~ $Tra_{u_2}(0.55,0.9,2.1,2.45)$  \\
  \hline
  $9$  & $Tra_{u_1}(0.35,0.7,1.2,1.55)$,~~~~~~~~~~~~~~~~~~ $Tra_{u_2}(0.27,0.62,2.1,2.45)$ \\
  \hline
   $10$ & $Tra_{u_1}(0.65,1.0,1.13,1.48)$,~~~~~~~~~~~~~~~~~$Tra_{u_2}(-0.4,-0.05,0.42,0.77)$ \\
  \hline
   $11$ & $Tra_{u_1}(-1.51,-1.16,0.46,0.81)$,~~~~~~~~ $Tra_{u_2}(-2.15,-1.8,-0.75,-0.4)$ \\
  \hline
   $12$ & $Tra_{u_1}(0.98,1.33,2.71,3.06)$,~~~~~~~~~~~~~ $Tra_{u_2}(-0.9,-0.55,0.71,1.06)$  \\
  \hline
  $13$ & $Tra_{u_1}(1.35,1.7,2.71,3.06)$,~~~~~~~~~~~~~~~ $Tra_{u_2}(-2.15,-1.8,-0.75,-0.4)$  \\
  \hline
  \end{tabular}\\
   $~$\\
  Table 6.  Membership functions
   \end{center}

The following table shows the results of five groups of experiments.
\begin{center}
\begin{tabular}{c|c|c|c|c|c}
  \hline
  rules & Exp 1 & Exp 2 & Exp 3 & Exp 4 & Exp 5\\
  \hline
  $1$  & $SO_3$  & $SO_3$  & $SO_1$  & $SO_3$  & $SO_3$\\
  $2$  & $SO_4$  & $SO_1$  & $SO_2$  & $SO_1$  & $SO_4$\\
  $3$  & $SO_2$  & $SO_1$  & $SO_3$  & $SO_3$  & $SO_3$\\
  $4$  & $SO_3$  & $SO_4$  & $SO_4$  & $SO_3$  & $SO_2$\\
  $5$  & $SO_1$  & $SO_3$  & $SO_1$  & $SO_1$  & $SO_1$\\
  $6$  & $SO_2$  & $SO_3$  & $SO_2$  & $SO_2$  & $SO_1$\\
  $7$  & $SO_3$  & $SO_4$  & $SO_3$  & $SO_4$  & $SO_3$\\
  $8$  & $SO_4$  & $SO_3$  & $SO_4$  & $SO_1$  & $SO_4$\\
  $9$  & $SO_2$  & $SO_2$  & $SO_1$  & $SO_4$  & $SO_2$\\
  $10$ & $SO_3$  & $SO_2$  & $SO_2$  & $SO_2$  & $SO_2$\\
  $11$ & $SO_2$  & $SO_1$  & $SO_3$  & $SO_1$  & $SO_2$\\
  $12$ & $SO_2$  & $SO_2$  & $SO_4$  & $SO_2$  & $SO_1$\\
  $13$ & $SO_2$  & $SO_2$  & $SO_3$  & $SO_4$  & $SO_1$\\
  \hline
  accuracy   & $86.58\%$    & $88.09\%$    & $88.09\%$   & $88.15\%$   & $86.58\%$\\
  \hline
  \end{tabular}\\
  $~$\\
  Table 7. Classification accuracy results
\end{center}

Using SO5I-FRC algorithm, we can get the classification accuracy of the five groups of experiments are 86.58\%, 88.09\%, 88.09\%, 88.15\% and 86.58\% respectively, and the average accuracy is 87.50\%.

The average accuracy of FARC-HD algorithm (using overlap functions but not quintuple implication principle) is 75.218\%, and the average accuracy of SO5I-FRC algorithm (using semi-overlap functions and quintuple implication principle) proposed in this paper is 87.50\%.  The experimental results show that the average accuracy of SO5I-FRC algorithm is higher than that of FARC-HD algorithm. On the other hand, we can conclude that semi-overlap functions and QIP method has a wide range of application and certain advantages in classification problems.

\section{Conclusions}

In this paper, we have studied semi-overlap functions and its corresponding quintuple implication principle and proposed a new fuzzy classification algorithm, which called SO5I-FRC algorithm. By doing comparative experiments, we can conclude that the average accuracy of SO5I-FRC algorithm is higher than FARC-HD algorithm. In the future work, we will try to use SO5I-FRC algorithm in more fields to get some better conclusions.

\medskip
\medskip
\noindent{\bf Acknowledgments.} This work is supported by the National Natural Science Foundation of China (61976130) and by the  Brazilian National Council for Scientific and Technological Development  -- CNPq (311429/2020-3).

~~\\

\begin{tabular}{|l|}
  \hline
  $\mathbf{Remark}$: The main results of this paper were introduced at a conference (The\\Seminar on Logics for New-Generation Artificial Intelligence, see: https://www.\\ xixilogic.org
  /events/ngl2021/ or https://rwsk.zju.edu.cn/2021/1027/c2075a243\\
  5645/page.htm ) in October 2021, but it has not been published anywhere.  \\
  \hline
\end{tabular}

\end{document}